\documentclass[11pt, twoside]{article}
\usepackage{amsmath,amsthm,amssymb}
\usepackage{times}
\usepackage{enumerate}
\usepackage{latexsym}
\usepackage{amssymb}
\usepackage{amsmath}
\usepackage{fancybox}
\usepackage{epstopdf}
\usepackage{wasysym}
\usepackage{epsfig}
\usepackage{float}
\usepackage{amsfonts}
\usepackage{srcltx}
\usepackage{esint}
\usepackage[dvipsnames]{xcolor}
\usepackage{enumerate}


\numberwithin{equation}{section}

\frenchspacing

\textwidth=15cm
\textheight=22cm
\parindent=16pt
\oddsidemargin=-0.5cm
\evensidemargin=-0.5cm
\topmargin=-0.5cm


\pagestyle{myheadings}

\def\titlerunning#1{\gdef\titrun{#1}}

\makeatletter

\def\author#1{\gdef\autrun{\def\and{\unskip, }#1}\gdef\@author{#1}}

\def\address#1{{\def\and{\\\hspace*{18pt}}\renewcommand{\thefootnote}{}%
\footnote {#1}}%
\markboth{\autrun}{\titrun}}

\makeatother

\def\email#1{e-mail: #1}

\def\keywords#1{\par\medskip
\noindent\textbf{Keywords.} #1}

\def\subjclass#1{\par\medskip
\noindent\textbf{Mathematics Subject Classification (2010).} #1}


\newcommand{\ut}{{\underline t}}
\newcommand{\ux}{{\underline x}}
\newcommand{\uQ}{{\underline Q}}

\newcommand{\und}{\underline}

\def\a{{\bf a}}

\def\rn{\mathbb{R}^{N}}
  \def\v{{\bf v}}
 \def\w{{\bf w}} \def\z{{\bf z}} 
 
\def\div{{\rm div}}
\DeclareMathOperator{\dive}{div}
\DeclareMathOperator{\supp}{supp}

\DeclareMathOperator*{\sign}{sign}
\DeclareMathOperator*{\esssup}{ess\,sup}
\DeclareMathOperator*{\essinf}{ess\,inf}

\renewcommand{\epsilon}{{\varepsilon}}

\newcommand{\R}{{\mathbb R}}

\renewcommand{\d}{\,{\mathrm d}}
 \newcommand{\eps}{{\varepsilon}}
 \def\1{\raisebox{2pt}{\rm{$\chi$}}}
\def\dys{\displaystyle}

\newcommand{\ignore}[1]{}





\theoremstyle{definition}

\newtheorem{theorem}{Theorem}[section]
\newtheorem{corollary}[theorem]{Corollary}
\newtheorem{lemma}[theorem]{Lemma}
\newtheorem{proposition}[theorem]{Proposition}
\newtheorem{definition}[theorem]{Definition}
\newtheorem{remark}[theorem]{Remark}

\newtheorem{assumption}[theorem]{Assumption}


\begin{document}

\titlerunning{Optimal waiting time bounds for some flux-saturated diffusion equations}

\title{Optimal waiting time bounds for some flux-saturated diffusion equations}

\author{Lorenzo Giacomelli \and Salvador Moll \and Francesco Petitta}

\date{}

\maketitle

\address{L. Giacomelli: SBAI Department,
 Sapienza University of Rome, Via Scarpa 16, 00161 Roma, Italy; \email{lorenzo.giacomelli@sbai.uniroma1.it}
\and
S. Moll: Departament d'An\`{a}lisi Matem\`atica,
Universitat de Val\`encia,  Spain; \email{j.salvador.moll@uv.es}
\and
F. Petitta: SBAI Department,
 Sapienza University of Rome, Via Scarpa 16, 00161 Roma, Italy; \email{francesco.petitta@sbai.uniroma1.it}
}

\begin{abstract}
We consider the Cauchy problem for two prototypes of flux-saturated diffusion equations. In arbitrary space dimension, we give an optimal condition on the growth of the initial datum which discriminates between occurrence or nonoccurrence of a waiting time phenomenon. We also prove optimal upper bounds on the waiting time. Our argument is based on the introduction of suitable families of subsolutions and on a comparison result for a general class of flux-saturated diffusion equations.

\keywords{{waiting time phenomena, flux-saturated diffusion equations, entropy solutions, comparison principle, conservation laws}}

\subjclass{{35B99, 35K65, 35K67, 35D99, 35B51, 35L65}}

\end{abstract}

\section{Introduction}
\setcounter{equation}{0}

\subsection{Flux-saturated diffusion equations}

Flux-saturated diffusion equations are a class of second order parabolic equations of the form
\begin{equation}\label{general}
u_t=\dive \a(u,\nabla u),
\end{equation}
which are characterized by a hyperbolic scaling for large values of the modulus of the gradient, in the sense that
\begin{equation}\label{def-varphi}
\lim_{t\to +\infty} \a(z,t\v)\cdot\v = h^0(z,\v)
\quad \mbox{for all $z\ge 0$ and all $\v\in\rn$,}
\end{equation}
where $h^0:[0,+\infty)\times \rn\mapsto [0,+\infty)$ is positively $1$-homogeneous and convex in $\v$ (accounting for possible anisotropy effects), locally Lipschitz in $z$, and such that $h^0(\cdot,0)=0$ and $h^0(\cdot,\v)>0$ for all $\v\ne 0$. Here we restrict our attention to the case in which the saturated flux $h^0$ in \eqref{def-varphi} has the form
\begin{equation}\label{Phi-Str}
h^0(z,\v)=\varphi(z)\psi_0(\v)
\end{equation}
(cf. Remark \ref{somewhere}) and we are interested in the degenerate case, i.e., the case in which $\varphi(0)=0$.

\smallskip

To our knowledge, flux-saturated equations were first introduced in \cite{DuM} in the description of inertial confinement fusion, in which case $u$ represents the temperature. However, they find application whenever a saturation mechanism at high gradients, imposing a-priori bounds on speed or flux, is modeling-wise relevant for the phenomenon to be described (see for instance \cite{r1,r2,ckr, acms1,BW}). In addition, they emerge from a generalization of optimal transportation theory which accounts for relativistic-type cost functions (see \cite{br}). After pioneering contributions (\cite{bdp,bl2,dp}), the mathematical interest in flux-saturated equations is now steadily growing, leading to a well posedness theory for equations of the form \eqref{general}-\eqref{Phi-Str}. The theory is based on a suitable concept of entropy solution:
%
%
we refer to \S \ref{S-sol} for the precise definition and to \cite{C-JDE,C-PM,c15,cccss2,cccss} for recent overviews on modeling and analytical aspects.

\smallskip

Our focus is on two model equations which are known to approximate the porous medium equation (\cite{Caselles-Annali}): the {\it relativistic porous medium equation},
\begin{equation}\label{m}
u_t= \nu \dive \left(\frac{u^m \nabla u}{\sqrt{u^2+\nu^2 c^{-2}|\nabla u|^2}}\right), \quad m\in (1,+\infty),
\end{equation}
which generalizes the so-called relativistic heat equation ($m=1$), and the {\it speed-limited porous medium equation},
\begin{equation}\label{M}
u_t=\nu \dive \left(\frac{u \nabla u^{M-1}}{\sqrt{1+\nu^2 c^{-2}|\nabla u^{M-1}|^2}}\right),\quad M\in (1,+\infty)\,,
\end{equation}
where $\nu>0$ is a kinematic viscosity constant and $c>0$ represents a characteristic limiting speed. The former was proposed in \cite[Eq. (16)]{r2} with $m=3/2$ and in \cite[Eq. (34)]{br} with $m=1$, whereas the latter was proposed in \cite[Eq. (19)]{r2} (see also \cite{C-PM}). Up to the scaling $\hat t=\frac{c^2}{\nu}t$, $\hat x= \frac{c}{\nu} x$, we will hereafter assume without losing generality that
\begin{equation*}
\nu=c=1.
\end{equation*}

Equation \eqref{m} and \eqref{M} share common general features, such as finite speed of propagation of the support (\cite{G15}) and persistence of jump discontinuities (\cite{C-JDE}). However, they have remarkable differences, generated by the different scaling for large gradients: in one space dimension, a monotone increasing solution to \eqref{m}, resp. \eqref{M}, formally satisfies
\begin{equation}\label{heur1}
u_t \sim \left(u^m\right)_x \ \mbox{ for $u_x\gg 1$} , \quad\mbox{resp.}\quad
u_t\sim u_x \ \mbox{ for $(u^{M-1})_x\gg 1$.}
\end{equation}
This reflects into different qualitative behavior of solutions, highlighted also by numerical simulations as in  \cite{cmsv, ACMSV,CCM}. For instance, \eqref{heur1} suggests that \eqref{m} may yield to the {\em formation} of jump discontinuities if $m>1$, whereas \eqref{M} may not, and that the speed of propagation of the support is formally given by $u^{m-1}$ for \eqref{m} and by $1$ for \eqref{M}. For this reason, in the {former} case we conjecture that the formation of a discontinuity is not only sufficient (\cite{C-JDE}), but also necessary for the support to expand.

\subsection{Waiting-time phenomena: the main result}

The aforementioned difference manifests itself also in the {\it waiting time phenomenon}, a positive time before which the solutions' support does not expand around a point $x_0\in \R^N$. Starting from the porous medium equation (see \cite{V} for a review), this phenomenon is well known to occur for various classes of degenerate parabolic equations and systems, also of higher order (see e.g. \cite{DGG,GG,G,F} and references therein). Concerning \eqref{m} and \eqref{M}, after numerical and formal arguments in \cite{ACMSV,CCM}, rigorous sufficient conditions for a positive waiting time have been recently given in \cite{G15}: a positive constant $C$, depending only on $N$ and $m$ (resp. $M$), exists such that if
\begin{eqnarray}\label{wt1}
\esssup_{x\in\R^N} |x-x_0|^{-\frac{1}{m-1}} u_0(x)=L<+\infty && \quad\mbox{if $u$ solves \eqref{m}, or}
\\ \label{wt2}
\esssup_{x\in\R^N} |x-x_0|^{-\frac{2}{M-1}} u_0(x)=L<+\infty && \quad\mbox{if $u$ solves \eqref{M},}
\end{eqnarray}
then the entropy solution to the Cauchy problem for \eqref{m}, resp. \eqref{M}, is such that
\begin{equation}\label{wtr}
u(t, x_0)=0 \quad\mbox{for all}\quad t \le T_\ell:= \left\{\begin{array}{ll} {C L^{1-m}} & \mbox{if $u$ solves \eqref{m} }
\\
{C L^{1-M}}& \mbox{if $u$ solves \eqref{M}}
\end{array}
\right.
\end{equation}
(we refer to Section \ref{S-sol} for the definition of entropy solution). This result provides a {\em lower bound} $T_\ell$ on the waiting time. Based on \eqref{heur1}, in \cite{G15} it is also conjectured that these growth exponents are sharp. The main result of this paper confirms this fact.

\begin{theorem} \label{Tm-1}
Let $u_{0}\in L^\infty(\R^N)\cap L^1(\R^N)$ be nonnegative. Let $u$ be the entropy solution to the Cauchy problem for \eqref{m} (resp. \eqref{M}) with initial datum $u_{0}$ in the sense of Definition \ref{def-sol}. Let
$$
t_*=\sup\left\{t\ge 0: \ x_0\in \overline{\R^N\setminus \supp(u(\tau))} \quad \mbox{for all $\tau\in [0,t]$}\right\}.
$$
If $v_0 \in \mathbb S^{N-1}$ exists such that
\begin{equation}\label{crit-m}
\lim_{\rho\to 0^{+}} \essinf_{x\in B(x_0+\rho v_0,\rho)} u_{0}(x) |x-x_0|^{-\frac{1}{m-1}} \ge L\in (0,+\infty] \quad\mbox{if $u$ solves \eqref{m}},
\end{equation}
or
\begin{equation}\label{crit-M}
\lim_{\rho\to 0^{+}} \essinf_{x\in B(x_0+\rho v_0,\rho)} u_{0}(x) |x-x_0|^{-\frac{2}{M-1}} \ge L\in (0,+\infty] \quad\mbox{if $u$ solves \eqref{M}},
\end{equation}
then a positive constant $W$, depending on $m$ (resp. $M$) and $N$, exists such that
\begin{equation}\label{def-W}
t_*\le T_u:= \left\{\begin{array}{ll} {W L^{1-m}} & \mbox{if $u$ solves \eqref{m} }
\\
{W L^{1-M}}& \mbox{if $u$ solves \eqref{M}.}
\end{array}
\right.
\end{equation}
In particular, $t_*=0$ if $L=+\infty$.
\end{theorem}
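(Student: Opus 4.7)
The plan is to construct an explicit entropy subsolution and invoke a comparison principle. I sketch the argument for \eqref{m}; the case \eqref{M} is parallel, with the shock speed $U^{m-1}$ replaced by $1$. After translating and rotating so that $x_0=0$ and $v_0=e_1$, I fix $L'\in(0,L)$ and use \eqref{crit-m} to produce $\rho_*>0$ with
$$
u_0(x)\ge L'|x|^{1/(m-1)}\quad\text{for a.e. }x\in B(\rho e_1,\rho),\ \rho\in(0,\rho_*].
$$
On the outer part $\{|x|\ge \rho\}\cap B(\rho e_1,\rho)$, this dominates from below the constant $U:=L'\rho^{1/(m-1)}$.

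I then propose the piecewise-constant ansatz $\underline u_\rho(t,x)=U\,\chi_{E(t)}(x)$, where $E(0)$ is the outer part of the ball and $\partial E(t)$ propagates toward the origin in direction $-e_1$. The heuristic \eqref{heur1} identifies $u_t=(u^m)_x$ as the hyperbolic limit, and the Rankine--Hugoniot relation yields propagation speed $U^{m-1}$ for a jump from $0$ to $U$; since the flux $-u^m$ is concave on $[0,U]$ when $m>1$, this shock is entropy-admissible, so the left face of $E(t)$ reaches the origin at time
$$
T_\rho=\rho/U^{m-1}=L'^{1-m},
$$
independent of $\rho$. Two steps must be carried out to make this rigorous: (i) check that $\underline u_\rho$ is an entropy subsolution of the \emph{full} equation \eqref{m} (not merely of the hyperbolic limit), which amounts to computing the jump condition associated with the saturated flux $h^0$ in \eqref{def-varphi}--\eqref{Phi-Str} and verifying its compatibility with Definition \ref{def-sol}; (ii) apply the comparison principle for entropy solutions of \eqref{general}--\eqref{Phi-Str} announced in the abstract, to get $u\ge\underline u_\rho$, and hence $0\in\supp u(T_\rho,\cdot)$ and $t_*\le L'^{1-m}$. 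Letting $L'\uparrow L$ then yields \eqref{def-W}.

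For \eqref{M} the same strategy with $U=L'\rho^{2/(M-1)}$ and shock speed $1$ (because $u_t\sim u_x$ is the hyperbolic limit) produces a subsolution that covers the distance $\rho$ in time $\rho$; choosing $\rho$ of order $L^{1-M}$ (which is permissible as soon as $L^{1-M}\le\rho_*$) then delivers the bound. The main obstacle I anticipate is step (i): verifying the entropy subsolution inequality for a moving jump in the flux-saturated setting is delicate, because one must simultaneously exploit the hyperbolic character of the saturated flux (which makes the jump propagate at the predicted speed) and respect the parabolic entropy structure of Definition \ref{def-sol}. A secondary technical point is the lift from $N=1$ to $N\ge 2$ without shrinking the effective propagation speed of the left face of $E(t)$, for which a half-space (planar-shock) geometry seems the natural choice; the case $L=+\infty$ then follows by letting $L'\to+\infty$ in the above argument.
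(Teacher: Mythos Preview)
Your step (i) is not merely delicate: the piecewise-constant ansatz $\underline u_\rho=U\chi_{E(t)}$ is \emph{not} an entropy subsolution to \eqref{m} or \eqref{M} in the sense of Definition~\ref{def-sub}. In the inequality \eqref{main-ineq}, the flux $\a(u,\nabla u)$ involves only the \emph{absolutely continuous} part $\nabla u$ of $Du$; for a step function this vanishes identically, hence $T(u)S(u)\a(u,\nabla u)\equiv 0$ and its divergence has no singular contribution on $\partial E(t)$. On the expanding face with outward normal speed $V>0$, the singular part of \eqref{main-ineq} then reads
\[
\int_0^{U}(ST)'(\sigma)\varphi(\sigma)\,\d\sigma \ \le\ -V\int_0^{U}S(\sigma)T(\sigma)\,\d\sigma,
\]
which fails since the left-hand side is nonnegative and the right-hand side is nonpositive. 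In short, the Rankine--Hugoniot computation from the hyperbolic limit does not carry over unless $\a(u,\nabla u)$ actually \emph{saturates} near the front, and that requires $|\nabla u|\to\infty$ there. A quick sanity check confirms the failure: for \eqref{M} your construction would give $t_*\le\rho$ for every small $\rho$ (the shock speed being $1$ and the distance $\sim\rho$), hence $t_*=0$ for any $L>0$, contradicting the lower bound $t_*\ge CL^{1-M}$ in \eqref{wtr}.

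The paper repairs this precisely by building profiles whose absolutely continuous gradient blows up at the moving boundary. For \eqref{m} the subsolution has the form $A(t)^{-1}\bigl(1+\sqrt{r(t)^2-|x|^2}\bigr)\chi_{\{|x|<r(t)\}}$: the square-root factor forces $|\nabla u|\to\infty$ as $|x|\to r(t)^-$, so that $\a(u,\nabla u)\cdot\nu\to -(u^+)^m$ and the singular divergence term supplies exactly what is needed to close the jump condition with $r'=(u^+)^{m-1}$. A second ingredient you are missing is the decaying amplitude $A(t)$ (chosen via $(A^{m-1})'=\gamma$), required to satisfy the inequality in the \emph{bulk}, where the equation is genuinely parabolic; the interplay between the hyperbolic front and the parabolic interior is where the work lies. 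For \eqref{M} no jump is needed and the paper uses a continuous radial profile. Finally, the planar half-space geometry is incompatible with $\underline u\in C([0,\tau);L^1(\R^N))$, which Definition~\ref{def-sub} requires; the paper works with compactly supported radial barriers in balls.
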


The growth conditions \eqref{crit-m} and \eqref{crit-M} imply in particular that $\supp(u_0)$ satisfies an interior ball property at $x_0$,  i.e., $R>0$ exists such that $B(x_0+v_{0}R, R)\subset {\rm supp}(u_{0})$.

\smallskip

The results in Theorem \ref{Tm-1} are sharp. Indeed, comparing Theorem \ref{Tm-1} with \eqref{wt1}-\eqref{wt2} we see that the growth exponents in \eqref{crit-m}-\eqref{crit-M} are optimal. Note that the growth exponent $2/(M-1)$ coincides with that of the limiting porous medium equation, whereas $1/(m-1)$ does not. In addition, comparing Theorem \ref{Tm-1} with \eqref{wtr}, we see that the {\em upper bound} $T_u$ on the waiting time given in \eqref{def-W} is also optimal, in terms of scaling with respect to $L$.

\smallskip

The first main ingredient in our argument is a comparison result between solutions and subsolutions (see Theorem \ref{T-sub}). Based on Kruzhkov doubling method, a general approach for proving uniqueness of entropy solutions to degenerate flux-saturated equations has been introduced in \cite{NA-ACM,ACM} and later followed, or referred to, in quite a few subsequent papers (\cite{ACMM-ARMA, ACMM-JEE, ACMM-MA,ACM-JDE, C-DCDS, acms1, C-PM, CC, G15}). However, no comparison result is available when subsolutions are defined the way we need in our arguments (see Definition \ref{def-sub}). Therefore, in Section \ref{S-sol} we revisit the notion of (sub-)solution to the Cauchy problem for Eq. \eqref{general}, providing a general result on comparison with subsolutions for equations satisfying \eqref{def-varphi}-\eqref{Phi-Str} (see Assumption \ref{H} for details).

\smallskip

The second main ingredient in our argument is the introduction of suitable families of subsolutions, built such that optimal results may be obtained: their construction is outlined in the next subsection. With such subsolutions at hand, the strategy for Theorem \ref{Tm-1} becomes analogous to the one used for the porous medium equation (see \cite{V} and references therein). It is worked out in Sections 3 and 4, where the main result is proved: we argue by comparison, showing that subsolutions exist whose support is initially contained in $B(x_0+v_{0}R, R)$ and which expands up to $x_0$ within time $T_u$.

\smallskip

Besides comparison arguments, energy methods have also been developed in the analysis of waiting time phenomena in \cite{DGG,F,GG,G}. These methods are potentially capable of treating equations of general form (as opposed to explicit prototypes), leading to weaker, integral-type conditions on the initial datum. It would be interesting to explore the applicability of these methods to more general classes of flux-saturated diffusion equations of the form \eqref{general}.

\subsection{Classes of subsolutions}\label{ss-cs}

We now give a formal overview of the construction of subsolution in the case of \eqref{m}. As we mentioned, we expect that the support of solutions to \eqref{m} expands {\em only if} the solution has a jump discontinuity at the support's boundary. Therefore, it is natural to look for subsolutions which share the same property. Up to scaling and translation invariance, a prototype form is
$$
u(t,x)=\frac{1}{A(t)}f(r(t),|x|)\chi_{B(0,r(t))}(x), \quad f(r,y)=(1+(r^2-y^2)^\alpha), \quad\alpha>0,
$$
which is smooth in $B(0,r(t))$ with a moving front at $|x|=r(t)$. On this jump set, the upper and lower limits are given by $u^+(t,x)=1/A(t)$, resp. $u^-(t,x)=0$, and the inequality
\begin{equation}\label{sub-formal}
u_t\le \dive \left(\frac{u^m \nabla u}{\sqrt{u^2+|\nabla u|^2}}\right)
\end{equation}
formally translates into
\begin{equation}\label{lkj}
r'(t) \lim_{|x|\to r(t)^-} u(t,x) \le \frac{x}{r} \cdot \lim_{|x|\to r(t)^-} \frac{u^m \nabla u}{\sqrt{u^2+|\nabla u|^2}}.
\end{equation}
Provided that $\alpha<1$, we have $|\nabla u|\to +\infty$ as $x\to r(t)^-$. Therefore \eqref{lkj} reduces to $r'(t)\le A^{1-m}(t)$. In order to reach optimal results, consistently with the Rankine-Hugoniot condition we impose the equality:
\begin{equation}\label{crucial1-intro}
r'(t)= A^{1-m}(t) =\left.\frac{(u^+)^m-(u^-)^m}{u^+-u^-}\right|_{|x|=r(t)}.
\end{equation}
On the other hand, when $y\ll 1$, the degenerate parabolic structure dominates and \eqref{sub-formal} translates into
$$
u_t \lesssim  \dive \left(u^{m-1} \nabla u\right),
$$
that is,
\begin{eqnarray}
\dys \nonumber
-\frac{A'}{A^{2}}(1+r^{2\alpha})+\frac{2\alpha}{A} r^{2\alpha-1} r' \stackrel{\eqref{crucial1-intro}}=  -\frac{A'}{A^{2}}(1+r^{2\alpha})+ 2\alpha r^{2\alpha-1} A^{-m}
\\ \nonumber \\ \dys \label{lkj1}  \lesssim  - 2\alpha N A^{-m} r^{2\alpha-2} (1+r^{2\alpha})^{m-1}.
\end{eqnarray}
In order to enforce homogeneity of \eqref{lkj1} with respect to $A$, we choose
\begin{equation}\label{crucial2-intro}
A^{m-2}(t)A'(t)=\gamma
\end{equation}
for some constant $\gamma>0$. Combining \eqref{crucial1-intro} and \eqref{crucial2-intro} we obtain
$$
A(t) = \left((m-1)(1+\gamma t)\right)^{\frac{1}{m-1}}\,, \quad
r(t) = r_{0}+\frac{1}{\gamma (m-1)}\log(1+\gamma t)\,.
$$
As opposed to the porous medium equation, however, proving that such functions are indeed subsolutions is not obvious for two reasons: first, the crossover between the parabolic scaling (for $|y|\ll 1$) and the hyperbolic scaling (for $|r(t)-y|\ll 1$); second,  the nontrivial notion of subsolution (see Def. \ref{def-sub} below). On the other hand, the appropriate identification of $A$ and $r$ permits to obtain optimal results in terms of both growth exponent and waiting time bounds. Analogous arguments lead to a family of subsolutions for \eqref{M}, which up to scaling and translation invariances has the form
$$
u(t,x)= b^{\frac{1}{1-M}}\left(\ell-\frac{1}{1+wt}\right)^{\frac{1}{1-M}} \left(1-\frac{|x|^{2}}{(1+wt)^{2}}\right)^{\frac{1}{M-1}}_{+}
$$
for suitable $b>0$, $w>0$ and $\ell>0$ (see Section  \ref{S-M}).

\subsection{Notation}\label{ss-not}

For $a,b,\ell\in\R$ we let
\begin{eqnarray*}
&\mathcal T^+ = \{T_{a,b}^\ell: \ 0<a<b, \ \ell\le a\}, \quad\mbox{where}\quad T_{a,b}^\ell(r)=\max\{\min\{b,r\},a\}-\ell\ . &
\end{eqnarray*}
For a given function $T=T_{a,b}^\ell\in \mathcal T^+$, we denote with the superscript $0$ its translation of a height $\ell$: that is, we let $T^0:= T + \ell = T_{a,b}^0 \in \mathcal T^+$. For $f\in L^1_{loc}(\R)$ we let
$$
J_f(r):=\int_0^r f(s)\d s.
$$

We use standard notations and concepts for $BV$ functions as in \cite{AFP}; in particular, for $u\in BV(\R^N)$, $\nabla u\mathcal L^N$, resp.  $D^s u$, denote the the absolutely continuous, resp. singular, parts of $Du$ with respect to the Lebesgue measure $\mathcal L^N$, $J_u$ denotes its jump set and the approximate upper and lower limits of $u$ on $J_u$ are denoted by $u^+$ and $u^-$, respectively; i.e., $u^+(x)>u^-(x)$ for $x\in J_u$.
%
%

\section{Entropy (sub-)solutions}\setcounter{equation}{0}
\label{S-sol}

In this section we revisit the notion of entropy (sub-)solution to the Cauchy problem for \eqref{general}. We consider a function $\a$ satisfying the following properties:

\begin{assumption}
\label{H}{\rm
Let $Q=(0,\infty)\times \R^N$. The function $\a:\overline Q\to\R^N$ is such that:

\smallskip

\noindent $(i)$ ({\it Lagrangian}) there exists $f\in C(\overline Q)$ such that $\nabla_\v f=\a\in C(\overline Q)$, $f(z,\cdot)$ is convex, $f(z,0)=0$ for all $z\in [0,\infty)$, and
\begin{equation*} 
C_0(z)|\v| - D_0(z) \le f(z,\v)\le M_0(z)(1+|\v|) \quad\mbox{for all } (z,\v)\in Q
\end{equation*}
for nonnegative continuous functions $M_0,C_0\in C([0,\infty))$ and $D_0\in C((0,\infty))$, with $C_0(z)>0$ for $z>0$;

\smallskip

\noindent $(ii)$ ({\it flux}) $D_\v \a\in C(\overline Q)$; $\a(z,0)=\a(0,\v)=0$ and $h(z,\v):=\a(z,\v)\cdot \v=h(z,-\v)$ for all $(z,\v)\in \overline Q$; for any $R>0$ there exists $M_R>0$ such that
\begin{equation}\label{cvb}
|\a(z,\v)-\a(\hat z,\v)|\le M_R |z-\hat z| \quad\mbox{for all $z,\hat z\in [0,R]$ and all $\v\in\R^N$;}
\end{equation}

\noindent $(iii)$ ({\it recession functions}) the {\it recession functions} $f^0$ and $h^0$, defined by
$$
f^0(z,\v)=\lim_{t\to +\infty} \frac{1}{t} f(z,t\v), \quad h^0(z,\v)=\lim_{t\to +\infty} \frac{1}{t} h(z,t\v),
$$
exist in $\overline Q$; furthermore, a function $\varphi\in \mbox{Lip}_{loc}([0,\infty))$ with $\varphi(0)=0$ and $\varphi>0$ in $(0,\infty)$ and a  positive $1$-homogeneous convex function $\psi_{0}:\rn\mapsto \mathbb{R}$ {with $\psi_0(0)=0$ and $\psi_0(\v)>0$ for $\v\ne 0$} exist such that
\begin{equation}\label{def-varphi-bis}
f^0(z,\v)=h^0(z,\v)=\varphi(z)\psi_{0}(\v) \quad \mbox{for all $(z,\v)\in \overline Q$}
\end{equation}
(cf. \eqref{def-varphi} and \eqref{Phi-Str}), and
\begin{equation}\label{H3}
\mbox{$|\a(z,\w)\cdot \v|\le \varphi(z)\psi_{0}(\v)$  for all $(z,\v)\in Q$, $\w\in \R^N$.}
\end{equation}
}
\end{assumption}

The convexity of $f$ implies that
$$
(\a(z,\v)-\a(z,\hat \v))\cdot (\v-\hat \v)\ge 0 \quad\mbox{for all $\v,\hat \v\in\R^N$}
$$
which, combined with \eqref{cvb}, also yields
\begin{equation}\label{H2}
(\a(z,\v)-\a(\hat z,\hat \v))\cdot (\v-\hat \v)\ge - M_R|z-\hat z||\v-\hat \v|
\end{equation}
for all $z,\hat z\in [0,R]$ and all $\v,\hat \v\in\R^N$.

\smallskip

The concept of entropy solution to the Cauchy problem for \eqref{general} has been introduced in \cite{ACM} and later  extended in \cite{ACMM-ARMA, C-DCDS, C-PM}. At the core of this concept is an entropy inequality (cf. \eqref{main-ineq} below) which follows from formally testing \eqref{general} by $\phi S(u)T(u)$ with $S,T\in \mathcal T^+$ and $\phi$ smooth and nonnegative. In particular, when constructing a solution as limit of solutions to suitable approximating problems, one needs to argue by lower semi-continuity on terms of the form
$$
S(u)a(u,\nabla u)\cdot \nabla T(u) = S(T^0(u))h(T^0(u),\nabla T^0(u))
$$
(see the discussion in \cite[\S 2.2 and 3.2]{ACM}). This leads to the following entropy inequality:
\begin{eqnarray}
\nonumber
\lefteqn{\int_0^{+\infty} \langle h_S(u,DT(u)) +h_T(u,D S(u)),\phi \rangle \d t }
\\ \label{main-ineq}
&\le &  \int_0^{+\infty} \int_{\R^N} \left(J_{TS}(u)\phi_t - T(u)S(u) \a(u,\nabla u) \cdot \nabla \phi\right) \d x\d t \,,
\end{eqnarray}
where $h_S(u,DT(u))$ is the Radon measure defined by
\begin{eqnarray}\nonumber
\lefteqn{\langle h_S(u,DT(u)), \phi\rangle  := \int_{\R^N} \phi S(T^0(u)) h(T^0(u),\nabla T^0(u))\d x} \\ && + \int_{\R^N} \phi \psi_{0}\left( \frac{DT^{0}(u)}{|D T^{0}( u)|}\right) \d |D^s J_{S\varphi}(T^0(u))| \quad\mbox{for all $\phi\in C_c(\R^N)$}
 \label{lsc}
\end{eqnarray}
and $\varphi$,$\psi_{0}$ are defined through \eqref{def-varphi-bis}. This motivates the following definition:

\begin{definition}\label{def-sol}
Let $\a$ be such that Assumption \ref{H} holds and let $u_0\in L^\infty(\R^N)\cap L^1(\R^{N})$ be nonnegative. A nonnegative function $u\in C([0,+\infty); L^1(\R^N))\cap L^\infty((0,\infty)\times\R^N)$ is an entropy solution to the Cauchy problem for \eqref{general} with initial datum $u_0$ if $u(0)=u_0$ and:
\begin{itemize}
\item[$(i)$] $T^a_{a,b}(u)\in L^1_{loc}((0,+\infty); BV(\R^N))$ for all $0<a<b$;
\item[$(ii)$] $u_t=\dive (\a(u,\nabla u))$ in the sense of distributions;
\item[$(iii)$] inequality \eqref{main-ineq} holds for any $S,T\in \mathcal T^+$ and any nonnegative $\phi \in C^\infty_c((0,+\infty)\times \R^N)$.
\end{itemize}
\end{definition}

Definition \ref{def-sol} implies mass conservation:
\begin{proposition}\label{prop-mass-cons}
Any solution $u$ in the sense of Definition \ref{def-sol} is such that
\begin{equation}\label{eq-mass-cons}
\int_{\R^N} u(t,x)\d x =\int_{\R^N} u_0(x) \d x \quad {\rm for \ all \ }t\in [0,+\infty).
\end{equation}
\end{proposition}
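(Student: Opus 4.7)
The plan is to test the distributional identity $u_t=\dive \a(u,\nabla u)$ from item $(ii)$ of Definition \ref{def-sol} against a product test function $\phi(t,x)=\eta(t)\xi_R(x)$ and then to let $R\to\infty$. Here $\eta\in C^\infty_c((0,\infty))$ is arbitrary and $\xi_R\in C^\infty_c(\R^N)$ is a radial cutoff satisfying $0\le \xi_R\le 1$, $\xi_R\equiv 1$ on $B(0,R)$, $\supp\xi_R\subset B(0,2R)$ and $|\nabla \xi_R|\le C_0/R$. The identity tested against $\phi$ reads
\begin{equation*}
\int_0^\infty \eta'(t)\int_{\R^N} u(t,x)\,\xi_R(x)\,\d x\,\d t=\int_0^\infty \eta(t)\int_{\R^N}\a(u,\nabla u)\cdot\nabla\xi_R(x)\,\d x\,\d t,
\end{equation*}
so the problem reduces to showing that the right-hand side vanishes in the limit $R\to\infty$ for every fixed $\eta$.

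The key ingredient is a pointwise bound of the form $|\a(u,\nabla u)|\le C u$ a.e., with $C$ depending only on $\|u\|_\infty$. To obtain it one applies \eqref{H3} with $\v$ equal to the unit vector parallel to $\a(u,\nabla u)$; positive $1$-homogeneity and continuity of $\psi_0$ on $\mathbb S^{N-1}$ yield $|\a(u,\nabla u)|\le C_\psi\,\varphi(u)$ a.e., where $C_\psi:=\max_{|\v|=1}\psi_0(\v)<\infty$. Since $\varphi\in \mbox{Lip}_{loc}([0,\infty))$ and $\varphi(0)=0$, the Lipschitz constant of $\varphi$ on $[0,\|u\|_\infty]$ upgrades this to $|\a(u,\nabla u)|\le Cu$ a.e. In particular $\a(u,\nabla u)$ is an $L^\infty$ vector field, so the distributional equation in (ii) makes perfect sense and the test function $\phi$ above is admissible.

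Using this bound, the right-hand side of the tested identity is dominated by
\begin{equation*}
\frac{CC_0}{R}\int_0^\infty |\eta(t)|\int_{B(0,2R)\setminus B(0,R)} u(t,x)\,\d x\,\d t,
\end{equation*}
which tends to $0$ as $R\to\infty$ by dominated convergence, using $u\in C([0,\infty);L^1(\R^N))$ and the compact support of $\eta$. By the same ingredient the left-hand side converges to $\int_0^\infty \eta'(t)\left(\int_{\R^N} u(t,x)\,\d x\right)\d t$. Hence this last quantity vanishes for every $\eta\in C^\infty_c((0,\infty))$, which forces $t\mapsto \int_{\R^N} u(t,x)\,\d x$ to be constant on $(0,\infty)$. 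Continuity of $u$ in $C([0,\infty);L^1(\R^N))$ together with the initial condition $u(0)=u_0$ identifies the constant as $\int_{\R^N}u_0\,\d x$, which is \eqref{eq-mass-cons}. The only genuine point in the argument is the bound $|\a(u,\nabla u)|\le Cu$ extracted from \eqref{H3}; once that is secured, the rest is a routine cutoff argument.
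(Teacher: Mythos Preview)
Your argument is correct and follows essentially the same cutoff strategy as the paper: test the distributional identity against a space--time product cutoff, use a pointwise bound $|\a(u,\nabla u)|\le Cu$, and pass to the limit. The only notable difference is that you extract this bound from \eqref{H3} (via $|\a|\le C_\psi\varphi(u)\le Cu$), whereas the paper obtains it directly from \eqref{cvb} with $\hat z=0$ and $\a(0,\cdot)=0$; both routes are valid.
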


\begin{proof}
Let $\eta_R\in {\mathcal{D}}(\R^N)$ be an increasing sequence of nonnegative functions such that $\eta_R=1$ on $B(0,R)$, $\eta_R=0$ in $\R^N\setminus B(0,R+1)$, and $|\nabla\eta_R|\le C$. Let $\psi_\eps(t):=\chi_{\{[t_1,t_2]\}}\ast \rho_\eps$, where $\rho_\eps$ is a standard mollifier and $[t_1,t_2]\subset (0,+\infty)$. We donote by $C$ a generic positive constant independent of $\eps$ and $R$. Testing $(ii)$ in Definition \ref{def-sol} with $\psi_\eps(t)\eta_R(x)$ (with $\eps$ sufficiently small) and integrating by parts we obtain
\begin{eqnarray*}
-\int_0^{+\infty}\int_{\R^N} u \eta_R \psi'_{\eps}\d x\d t = \int_0^{+\infty} \int_{\R^N} \psi_\eps\a(u,\nabla u)\cdot \nabla\eta_R \d x\d t.
\end{eqnarray*}
Since $u\in C([0,+\infty);L^1(\R^N))$, letting $\eps\to 0$ we obtain
\begin{eqnarray}\label{fgh}
\left.\int_{\R^N} u \eta_R \d x\right|_{t=t_1}^{t=t_2} = \int_{t_1}^{t_2}\int_{\R^N} \a(u,\nabla u)\cdot \nabla\eta_R \d x\d t.
\end{eqnarray}
Since $u\in L^\infty((0,\infty)\times\R^N)$, \eqref{cvb} implies that $|\a(u,\nabla u)|\le C u$. Therefore
\begin{eqnarray*}
\left|\int_{t_1}^{t_2}\int_{\R^N} \a(u,\nabla u)\cdot \nabla\eta_R \d x\d t\right| &  \le & C \int_{t_1}^{t_2}\int_{B(0,R+1)\setminus B(0,R)} u \d x\d t \to 0 \quad\mbox{as \ $R\to +\infty$.}
\end{eqnarray*}
Since $u\ge 0$, the two terms on the left-hand side of \eqref{fgh} pass to the limit as $R\to +\infty$ by monotone convergence. Therefore
$$
\int_{\R^N} u(t_2,x)\d x=\int_{\R^N} u(t_1,x)\d x.
$$
Again since $u\in C([0,+\infty);L^1(\R^N))$, passing to the limit as $t_1\to 0^+$ we obtain \eqref{eq-mass-cons}.
\end{proof}

\begin{remark}\label{here} Existence and uniqueness of entropy solutions to the Cauchy problem for equations \eqref{m} and \eqref{M} are contained in, or follow from, earlier results in \cite{ACM}, resp. \cite{C-PM}. We refer e.g. to \cite{G15} for details. In fact, \cite{ACM,C-PM} contain existence and uniqueness results for general classes of equations \eqref{general} satisfying Assumption \ref{H} together with slight additional hypotheses.
\end{remark}

To our purposes, we use the  notion of subsolution for equations of the form \eqref{general} suggested by Caselles in \cite[Section 3.3]{C-DCDS}.  Such notion  is analogous to the one of (entropy) solution, except that $(ii)$ in Definition \ref{def-sol} is not required.

\begin{definition}\label{def-sub}
Let $\tau>0$ and let $\a$ such that Assumption \ref{H} holds. A nonnegative function $u\in C([0,\tau); L^1(\R^N))\cap$ $L^\infty([0,\tau]\times\R^N)$ is an entropy subsolution to equation \eqref{general} in $(0,\tau)\times \R^N$ if:
\begin{itemize}
\item[$(i)$] $T_{a,b}^a(u)\in L^1_{loc}((0,\tau); BV(\R^N))$ for all $0<a<b$;
\item[$(ii)$] inequality \eqref{main-ineq} holds for any $S,T\in \mathcal T^+$ and any nonnegative $\phi \in C^\infty_c((0,\tau)\times \R^N)$.
\end{itemize}
\end{definition}

This notion of subsolution yields the following comparison result:

\begin{theorem}\label{T-sub}
Let $\tau>0$ and let $\a$ such that Assumption \ref{H} holds. Let $u$ be an entropy solution to the Cauchy problem for \eqref{general} with initial datum $u_0 \in L^\infty(\R^N)\cap L^1(\R^N)$ and $\underline u$ be an entropy subsolution to equation \eqref{general} in $(0,\tau)$. If $\und u(0)\le u_0$, then $\underline u(t)\le u(t)$ for all $t\in (0,\tau)$.
\end{theorem}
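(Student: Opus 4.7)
My plan is to adapt Kruzhkov's doubling-of-variables method to the entropy-inequality framework of Definitions \ref{def-sol} and \ref{def-sub}, broadly along the lines of the uniqueness arguments in \cite{ACM, C-DCDS, C-PM}, with the crucial twist that the subsolution $\underline u$ is not assumed to satisfy the PDE in the sense of distributions: only the entropy inequality \eqref{main-ineq} is available for $\underline u$. I would duplicate the variables, writing \eqref{main-ineq} for $u(t,x)$ and for $\underline u(s,y)$ separately, and testing both with the product $\phi(t,x,s,y)=\rho_\varepsilon(t-s)\zeta_\varepsilon(x-y)\xi(t,x)$, where $\xi\in C^\infty_c((0,\tau)\times\R^N)$ is nonnegative and $\rho_\varepsilon,\zeta_\varepsilon$ are standard mollifiers. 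The truncation pairs $S,T\in\mathcal T^+$ are to be chosen so that, after the limit $a\to 0^+$, $b\to+\infty$, the combination $J_{TS}(u)-J_{TS}(\underline u)$ detects $(\underline u-u)^+$ on the doubled diagonal; this is legitimate because $u,\underline u\in L^\infty$.

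Adding the two doubled inequalities, the right-hand side splits into a time-derivative contribution and a spatial-divergence contribution. The time part, after the usual integration by parts in $s$ and $t$ against the mollifier kernel, produces in the limit the expected one-sided Kato rate $\frac{d}{dt}\int_{\R^N}(\underline u(t)-u(t))^+\d x$. The spatial part contains the critical cross-diffusion term
$$
\bigl(\a(u,\nabla u)-\a(\underline u,\nabla\underline u)\bigr)\cdot\nabla\bigl(T(u)S(u)-T(\underline u)S(\underline u)\bigr),
$$
whose symmetric part is nonnegative by convexity of $f$ and whose antisymmetric part is dominated, via the pseudo-monotonicity bound \eqref{H2}, by a quantity of order $M_R|u-\underline u|\,|\nabla u-\nabla\underline u|$. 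This latter quantity vanishes in the doubling limit $\varepsilon\to 0^+$ by standard Kruzhkov-type arguments based on the $BV$ regularity provided by $(i)$ in Definitions \ref{def-sol}--\ref{def-sub}.

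The main obstacle is the left-hand side of \eqref{main-ineq}, which encodes the singular diffusion through the measure \eqref{lsc}: the term $|D^s J_{S\varphi}(T^0(u))|$ must be paired, after doubling, via the Anzellotti-type duality between divergence-measure fields and $BV$ functions, and shown to enter the estimate with the correct sign. I would rely on the lower-semicontinuity and relaxation machinery of \cite[\S 3]{ACM} and \cite[\S 3.3]{C-DCDS}, checking that these steps survive when one of the two functions is only a subsolution; the key point is that both singular contributions appear on the left of \eqref{main-ineq} with a nonnegative coefficient and hence contribute favorably to the Kato estimate, which is the direction one needs for comparison (as opposed to equality, which would have required sign cancellations).

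Letting $\varepsilon\to 0^+$ and then $a\to 0^+$, $b\to +\infty$ yields
$$
\int_{\R^N}(\underline u(t_2)-u(t_2))^+\d x \le \int_{\R^N}(\underline u(t_1)-u(t_1))^+\d x \quad\mbox{for a.e. } 0<t_1<t_2<\tau.
$$
Passing $t_1\to 0^+$ using $u,\underline u\in C([0,\tau);L^1(\R^N))$ together with the assumption $\underline u(0)\le u_0$ gives $\underline u(t)\le u(t)$ in $L^1(\R^N)$ for every $t\in(0,\tau)$, as required.
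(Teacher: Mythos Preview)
Your outline follows the right general strategy (Kruzhkov doubling, as in \cite{ACM,C-DCDS}), but it skips over precisely the step that distinguishes \emph{comparison with a subsolution} from \emph{uniqueness of solutions}, and this gap is not a routine detail.

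When you add the doubled entropy inequalities, the truncations available in $\mathcal T^+$ force you to test $u$ against an approximation of $\sign(u-\underline u)_+$ and $\underline u$ against an approximation of $\sign(\underline u-u)_+$. After passing to the limit in the truncation parameters, the time-derivative contribution you obtain is therefore \emph{not} the one-sided Kato rate you claim; it is a combination involving both $(u-\underline u)_+$ and $(\underline u-u)_+$, schematically
\[
-\int \bigl((u-\underline u)_+\,\partial_t\chi + (\underline u-u)_+\,\partial_{s}\chi\bigr)\le 0,
\]
where $\chi$ is the time-mollified test function. To extract $\frac{d}{dt}\int(\underline u-u)_+$ from this, the paper rewrites the left-hand side as $\int(u-\underline u)\,\partial_t\chi$ plus the desired term, and then uses that $\int(u-\underline u)\,\partial_t\chi=0$. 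This vanishing is the crux: it holds because $\underline u$ does not depend on the doubled variable $t$, \emph{and} because $\int_{\R^N}u(t,x)\,dx$ is conserved---which is exactly where property $(ii)$ in Definition~\ref{def-sol} (the distributional PDE for $u$) enters, via Proposition~\ref{prop-mass-cons}. Your proposal never invokes mass conservation, and your test function $\xi(t,x)$ depends on $x$, which would obstruct this step.

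Two smaller points: the paper takes the outer truncation limit $b\to 0$ (so that $T/(b-a)\to\sign_+$), not $b\to+\infty$; and the spatial cutoff must eventually be removed, which again relies on the flux bound $|\a(u,\nabla u)|\le C u$ coming from \eqref{cvb}---this is handled in the paper by choosing the final test function to be purely temporal from the outset.
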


\begin{remark}\label{rem-appl}
Theorem \ref{T-sub} applies in particular to equations \eqref{m} and \eqref{M} with $\varphi(s)=s^{m}$, resp. $\varphi(s)=s$, and $\psi_{0}(\v)=|\v|$; we refer to Remark 1.3 in \cite{G15} for details.
\end{remark}

\begin{remark}\label{somewhere}
To our knowledge, known results on existence, uniqueness, and comparison for flux-saturated equations assume, as in Theorem \ref{T-sub}, some form of separation of variables for the limiting flux $h^0$ (mostly the one in \eqref{Phi-Str} and \eqref{def-varphi-bis}, but see also \cite{c15}). It would be interesting to see whether this assumption can be removed, treating a more general $h^0(z,\v)$ which is $1$-homogeneous and convex in $\v$ and locally Lipschitz in $z$. Though at the cost of further technical complications, we expect it to be possible. In any event, Assumption \ref{H} covers the model equations \eqref{m} and \eqref{M} (cf. Remark \ref{rem-appl}) and suffices to the scope of this work.
\end{remark}

As we mentioned in the Introduction, no comparison result is available when subsolutions are defined as in Definition \ref{def-sub}. Therefore, below we provide a complete and self-contained proof of Theorem \ref{T-sub}. The proof follows the approach introduced in \cite{NA-ACM,ACM}: however, it also clarifies and simplifies some of the arguments, such as the choice of testing functions (see the comment after \eqref{2UE6}) and the estimate of $I_2$ (see \eqref{def-I2}-\eqref{UEC}), easing the overall presentation.

\begin{proof}[Proof of Theorem \ref{T-sub}]
We will apply Kruzkhov's doubling argument, which is known to give rigorous basis to the formal multiplication of $(u-\underline u)_t$ by $\sign(u-\underline u)_+$. To this aim, in the entropy inequality \eqref{main-ineq} for $u$ we will replace $S$ by sequences $R_{\eps,l}(u)/\eps$ which approximate $\sign(u-l)_+$ as $\eps\to 0$. In addition, in order to deal with the lack of regularity of $u$ at $u=0$, we will replace $T(u)$ by sequences $T_{a,b}^a(u)/(b-a)$ approximating $\sign_+(u)$ as $a\to 0$ and $b\to 0$, in this order (in fact, this is the main motivation for the presence of two testing functions in \eqref{main-ineq}). Analogous choices will be made in the entropy inequality \eqref{main-ineq} for $\underline u$, with $S$ replaced by sequences $S_{\eps,l}(\underline u)/\eps$ approximating $\sign(\underline u-l)_+$ as $\eps\to 0$.

\smallskip

{\it (1). Doubling.} Let $b > a > 2\eps > 0$, $l\ge 0$, $T(r) =T_{a,b}^{a}(r)$, and $T_{a,b}(r)=T^0_{a,b}(r)$. We denote $\z=\a(u,\nabla u)$, $\underline \z=\a(\underline u, \nabla \underline u)$,
\begin{eqnarray}\label{def-R}
R_{\eps, l}(r) &:=& \left\{\begin{array}{ll} T_{l-\eps,l}(r) -(l-\eps) & \mbox{if $l>2\eps$},
\\
T_{\eps,2\eps}(r) -\eps & \mbox{if $l<2\eps$},
\end{array}\right.
\\ \label{def-S}
S_{\eps,l}(r) &:=& \left\{\begin{array}{ll} T_{l,l+\eps}(r) -l & \mbox{if $l>\eps$},
\\
T_{\eps,2\eps}(r) -\eps & \mbox{if $l<\eps$}.
\end{array}\right.
\end{eqnarray}
We choose two different pairs of variables $(t,x)\in Q_\tau$, $(\ut, \ux)\in \uQ_\tau:=Q_\tau$, and consider $u$, $\z$ and $\und u$, $\und \z$ as functions of $(t,x)$, resp. $(\ut,\ux)$. Let $0 \leq \phi \in {\mathcal D}((0, \tau))$, $\rho_m$ a sequence of mollifiers in $\R^N$, and $\tilde{\rho}_n$ a sequence of mollifiers in $\R$. Define
$$
\eta_{m,n}(t, x, \ut, \ux):= \rho_m(x - \ux) \tilde{\rho}_n(t - \ut) \phi
\bigg(\frac{t+\ut}{2} \bigg).
$$
For $(\ut,\ux)$ fixed, choosing $S=R_{\eps,\underline{u}}$ in \eqref{main-ineq} we obtain
\begin{eqnarray}\label{2UE5}
\nonumber
- \int_{Q_\tau} {J}_{T,R_{\eps,\underline{u}}}(u)(\eta_{m,n})_t
&+& \int_{Q_\tau} \eta_{m,n} \d \left(h_T(u,
D_x R_{\eps,\underline{u}}(u) ) + h_{R_{\eps,
\underline{u}}}(u,D_x T(u))\right)
\\ & + & \int_{Q_\tau} \z
\cdot \nabla_x \eta_{m,n} \ T(u) \ R_{\eps,
\underline{u}}(u) \leq 0.
\end{eqnarray}
Similarly, for $(t,x)$ fixed, choosing $S=S_{\eps,u}$ in \eqref{main-ineq} we obtain
\begin{eqnarray}\label{2UE6}
\nonumber - \int_{\uQ_\tau} {J}_{T,S_{\eps,u}}(
\underline{u}) (\eta_{m,n})_\ut
&+& \int_{\uQ_\tau} \eta_{m,n}\d
\left(h_T(\underline{u}, D_\ux S_{\eps,u}( \underline{u}) ) +
h_{S_{\eps,u}}(\underline{u},D_\ux T(\underline{u}))\right)
\\ &+& \displaystyle \int_{\uQ_\tau} \underline{\z}
\cdot \nabla_\ux  \eta_{m,n} \ T(\underline{u}) \ S_{\eps,
u}(\underline{u})\leq 0.
\end{eqnarray}
It seems that one can not directly choose $R_{\eps, \underline u} = T_{\underline u-\eps,\underline u } -(\underline u -\eps)$ as test function: indeed, $T_{\underline u-\eps,\underline u} \notin \mathcal T^+$  when $\underline u<\eps$ (analogous considerations hold for $S_{\eps,u}= T_{u,u+\eps} -u$ when $u=0$). This motivates the definitions in \eqref{def-R} and \eqref{def-S}. However, as shown in \eqref{recover1}-\eqref{recover2} below, such simpler form will be recovered after doubling variables and integrating by parts, due to the presence of the second truncating function $T$.

\smallskip

Integrating (\ref{2UE5}) in $\uQ_\tau$, (\ref{2UE6}) in $Q_\tau$, adding the two inequalities, taking into account that $\nabla_x \eta_{m,n} + \nabla_\ux  \eta_{m,n} = 0$, and noting that
$$
\int_{Q_\tau\times \uQ_\tau} \eta_{m,n} \d \left(h_{R_{\eps,\underline{u}}}(u,D_x T(u)) +
h_{S_{\eps,u}}(\underline{u},D_\ux T(\underline{u}))\right) \ge 0,
$$
we see that
\begin{equation}\label{2UE8}
I_1+I_2\le 0,
\end{equation}
where
\begin{eqnarray}\nonumber
I_1 &:=& -\int_{Q_\tau \times \uQ_\tau}
\left({J}_{T,R_{\eps,\underline{u}}}(u) (\eta_{m,n})_t +
{J}_{T,S_{\eps,u}}( \underline{u}) (\eta_{m,n})_\ut \right)
\\ \nonumber
I_2 &:= & \int_{Q_\tau\times \uQ_\tau}\eta_{m,n} \d  \left(h_T(u, D_x R_{\eps, \underline{u}}(u) + h_T(\underline{u}, D_\ux
S_{\eps,u}(\underline{u}))\right)
\\  \nonumber && - \int_{Q_\tau
\times \uQ_\tau} \underline{\z} \cdot
 \nabla_x \eta_{m,n}  T(\underline{u}) S_{\eps,u}(\underline{u})
- \int_{Q_\tau
\times \uQ_\tau} \z \cdot
\nabla_\ux  \eta_{m,n} T(u) R_{\eps, \underline{u}}(u).
\end{eqnarray}

{\it (2). The limit $\eps\to 0$.} We now divide \eqref{2UE8} by $\eps$ and let $\eps\to 0$. Concerning $I_1$, we note that
\begin{equation}\label{def-JE}
\frac{1}{\eps} {J}_{T,R_{\eps,l}}(r) = \int_0^r T(s) \frac{R_{\eps,l}(s)}{\eps} \d s \to J_T^l(r):=\int_0^r T(s) \sign (s-l)_+ \d s
\end{equation}
and, analogously, ${J}_{T,S_{\eps,l}}(r)\to {J}_T^l(r)$, as $\eps\to 0$.
Therefore, by dominated convergence,
\begin{equation}\label{ineqforj}
\frac{I_1}{\eps}\stackrel{\eps\to 0}\to -\int_{Q_\tau\times \uQ_\tau}\left((\eta_{m,n})_t J_T^{\underline u}(u) + (\eta_{m,n})_\ut J_T^u(\underline u)\right).
\end{equation}
Concerning $I_2$, we will argue that $\liminf_{\eps\to 0}{I_2/\eps}\ge 0$. After one integration by parts we obtain
\begin{eqnarray}\nonumber
I_2 &=& \int_{Q_\tau\times \uQ_\tau} \eta_{m,n} \d h_T(u, D_x R_{\eps, \underline{u}}(u)) + \int_{Q_\tau\times \uQ_\tau}
\eta_{m,n} T(\underline{u}) \ \underline{\z} \cdot \d D_x {S_{\eps,u}(\und u)}
\\
&+& \int_{Q_\tau\times \uQ_\tau} \eta_{m,n} \d h_T(\underline{u}, D_\ux  S_{\eps,u}(\underline{u}))  + \int_{Q_\tau\times \uQ_\tau}
\eta_{m,n}  T(u) \ \z \cdot \d D_\ux {R_{\eps,\und u}(u)}.\label{def-I2}
\end{eqnarray}
Due to the presence of $T$, the second and the fourth integrands in $I_{2}$ are nonzero only on $\{\und u>a\}$, resp. $\{u>a\}$. Moreover, for $r>a$, we have
$$
R_{\eps,l}(r)= \left\{\begin{array}{ll} T_{l-\eps,l}(r) -(l-\eps) & \mbox{if $l>a$}
\\
T_{l-\eps,l}(r) -(l-\eps)=\eps & \mbox{if $2\eps<l<a$}
\\
T_{\eps,2\eps}(r) -\eps =\eps & \mbox{if $l<2\eps$}
\end{array}\right\} = T_{l-\eps,l}(r)-(l-\eps) \quad\mbox{for $r>a$}.
$$
Analogously, $S_{\eps,l}(r)=T_{l,l+\eps}(r)-l$ for $r>a$. Therefore, in $I_2$ we may equivalently consider
\begin{eqnarray}\label{recover1}
R_{\eps,\und u}(u) &=& T_{\und u-\eps,\und u}(u)-(\und u-\eps) = T_{0,\eps}(u-\und u+\eps),
\\ \label{recover2} S_{\eps,u}(\und u) &=& T_{u,u+\eps}(\und u)-u= T_{0,\eps}(\und u-u).
\end{eqnarray}
The latter equalities in \eqref{recover1}-\eqref{recover2} show in particular that $R_{\eps,\und u}(u)+S_{\eps,u}(\und u)\equiv \eps$. Therefore,
$$
D_x R_{\eps,\und u}(u)= -D_x S_{\eps,u}(\und u) \quad\mbox{and}\quad D_\ux S_{\eps,u}(\und u) =-D_\ux R_{\eps, \und u} (u).
$$
Furthermore, letting
\begin{equation}\label{def-ue}
u_\eps := T_{\und u-\eps,\und u}(u), \quad  \und u_\eps := T_{u,u+\eps}(\und u),
\end{equation}
it follows from the former equalities in \eqref{recover1}-\eqref{recover2} that
$$
D_x R_{\eps,\und u}(u)= D_x u_\eps \quad\mbox{and}\quad D_\ux S_{\eps,u}(\und u) =D_\ux \und u_\eps.
$$
Altogether, $I_2$ may be rewritten as
\begin{eqnarray*}
I_2 &=& \int_{Q_\tau\times \uQ_\tau} \eta_{m,n} \d h_T(u, D_x u_\eps) - \int_{Q_\tau\times \uQ_\tau}
\eta_{m,n} T(\underline{u}) \ \underline{\z} \cdot \d D_x u_\eps
\\
&+& \int_{Q_\tau\times \uQ_\tau} \eta_{m,n} \d h_T(\underline{u}, D_\ux \underline{u}_\eps)  - \int_{Q_\tau\times \uQ_\tau}
\eta_{m,n}  T(u) \ \z \cdot \d D_\ux \und u_\eps.
\end{eqnarray*}
Let us write$I_2 = I_2(ac) + I_2(s)$, where $I_2(ac)$ and $I_2(s)$ contain the absolutely continuous, resp. singular, part of the measures involved in $I_2$. Let us first consider $I_2(ac)$. Letting
\begin{equation}\label{def-chie}
\chi_\eps:= \chi_{\{u<\underline u<u+\eps\}} = \chi_{\{\underline u-\eps <u <\und u\}},
\end{equation}
in view of \eqref{def-ue} and \eqref{recover1}-\eqref{recover2} we have $\nabla_x u_\eps=\chi_\eps \nabla_x u$ and  $\nabla_\ux \und u_\eps =\chi_\eps \nabla_\ux \und u$. Therefore, recalling \eqref{lsc}, $I_2(ac)$ may be rewritten as
\begin{eqnarray*}
I_2(ac) &=& \int_{Q_\tau
\times \uQ_\tau} \eta_{m,n} \chi_\eps (T(u)\z-T(\underline u)\underline \z)\cdot (\nabla_x u-\nabla_\ux \underline u)
\\ &=& \int_{Q_\tau
\times \uQ_\tau} \eta_{m,n} \chi_\eps (T(u)-T(\underline u)) \z\cdot (\nabla_x u-\nabla_\ux  \underline u)
\\ && + \int_{Q_\tau
\times \uQ_\tau} \eta_{m,n} \chi_\eps T(\underline u)(\z-\underline \z)\cdot (\nabla_x u-\nabla_\ux  \underline u)
\\ &=:& I_{2,1}(ac)+ I_{2,2}(ac).
\end{eqnarray*}
In view of \eqref{cvb} and since $u\in L^\infty(Q)$, we have that $\|\z\|_\infty \le M$. In addition, \eqref{def-chie} implies that
\begin{equation}\label{chi2}
\chi_\eps\cap\{\und u>a\}\subseteq\{u>a-\eps\}\ \  \text{and}\quad  \chi_\eps\cap\{ u>a \}\subseteq \{\und u>a\}.
\end{equation}
Therefore, since $T$ is $1$-Lipschitz,
\begin{eqnarray}\label{f1}\nonumber
\dys \frac{1}{\eps}|I_{2,1}(ac)|  &\leq  &\frac{{M}}{\eps}\|\eta_{m,n}\|_\infty \int_{Q_\tau
\times \uQ_\tau} \chi_\eps \chi_{\{u>a-\eps\}}\chi_{\{\underline u>{a}\}}
|u-\underline u| |\nabla_x u-\nabla_\ux  \underline u|
\\
\dys  & \stackrel{\eqref{def-chie}}\leq & M \|\eta_{m,n}\|_\infty \int_{Q_\tau
\times \uQ_\tau} \chi_\eps\chi_{\{u>a-\eps\}}\chi_{\{\underline u>{a}\}}
(|\nabla_x u|+| \nabla_\ux  \underline u|).
\end{eqnarray}
Similarly,
\begin{eqnarray}\label{f2}
\dys \nonumber
\frac{1}{\eps} I_{2,2}(ac) & \stackrel{\eqref{H2}}\ge & - \frac{M}{\eps} \int_{Q_\tau
\times \uQ_\tau} \eta_{m,n} \chi_\eps T(\underline u)|u-\underline u|\ |\nabla_x u-\nabla_\ux \und u|
\\ \dys & \stackrel{\eqref{chi2}}\ge &  - M \|\eta_{m,n}\|_\infty \int_{Q_\tau
\times \uQ_\tau} \chi_\eps \chi_{\{\underline u>a\}}\chi_{\{u>a-\eps\}} {\left(|\nabla_x u|+|\nabla_\ux \und u|\right)}.
\end{eqnarray}
We claim that
\begin{equation}\label{coarea}
\lim_{\eps\to 0} \int_{Q_\tau\times\uQ_\tau} \chi_{\{\und u>a\}} \d |D_xu_\eps| = \lim_{\eps\to 0} \int_{Q_\tau\times \uQ_\tau} \chi_{\{u>a-\eps\}} \d |D_\ux\und u_\eps| =0.
\end{equation}
Let us show the first one (the second is identical). By the coarea formula, we have for any $l>a$
\begin{eqnarray*}
\int_{\R^N} |D_x T_{l-\eps,l}(u)| &=& \int_{l-\eps}^l P(T_{l-\eps,l}(u)>\lambda)\d \lambda = \int_{l-\eps}^l P(T_{a/2,\ell}(u)>\lambda)\d \lambda \to 0
\end{eqnarray*}
as $\eps\to 0$, since $\lambda \mapsto P(T_{a/2,\ell}(u)>\lambda)$ is integrable in $\R$. Then \eqref{coarea} follows from dominated convergence, since
$$
(t,\ux,\ut)\mapsto \chi_{\{\und u>a\}} \int_{\R^N} \d |D_x u_\eps| \le \chi_{\{\und u>a\}} \int_{\R^N} \d |D_x T_{a/2,\ell}(u)| \in L^1((0,\infty)^2\times \R^N).
$$
Since $\chi_\eps|\nabla_x u|=|\nabla_x u_\eps|$ and $\chi_\eps|\nabla_\ux \und u|=|\nabla_\ux \und u_\eps|$, combining \eqref{f1}, \eqref{f2}, and \eqref{coarea} we conclude that
\begin{equation}
  \label{UEAC} {\liminf_{\eps\to 0}} \frac{I_{2}(ac)}{\eps}\geq 0.
\end{equation}
Recalling again \eqref{lsc}, we rewrite $I_2(s)$ as
\begin{eqnarray*}
I_2(s) &=& \int_{Q_\tau\times \uQ_\tau} \eta_{m,n}\left(\psi_{0}\left(\frac{D_{x} u_{\eps}}{|D_{x}u_{\eps}|}\right) \d |D_x^s  J_{T\varphi}(u_\eps)| - T(\underline u) \underline \z\cdot  \d D_x^s u_\eps \right)
\\
&& +\int_{Q_\tau\times \uQ_\tau} \eta_{m,n}\left(\psi_{0}\left(\frac{D_{\ux} u_{\eps}}{|D_ {\ux}u_{\eps}|}\right) \d |D_\ux ^s  J_{T\varphi}(\und u_\eps)|  -T(u)  \z\cdot  \d D_\ux ^s \und u_\eps \right)
\\ &:=& I_{2,1}(s)+I_{2,2}(s)
\end{eqnarray*}
and we only consider $I_{2,1}(s)$ ($I_{2,2}(s)$ is treated identically). Using the homogeneity of $\psi_{0}$ and Jensen's inequality, we have
$$
\begin{array}{l}
\dys - \underline \z\cdot  \int_{Q_{\tau}}\eta_{m,n}\d D_x^s u_\eps \stackrel{\eqref{H3}}\ge - \varphi(\und u) \psi_{0}\left(\int_{Q_{\tau}}\eta_{m,n}\d D_x^s u_\eps\right) \\ \\ \dys
\ge  - \varphi(\und u)  \int_{Q_{\tau}}\psi_{0}\left(\frac{D_x^s u_\eps}{|D_x^s u_\eps|}\right)\eta_{m,n}\d |D_x^s u_\eps|\,.
\end{array}
$$
Therefore, using the fact that $ \chi_{\{\und u>a\}}\leq 1$ we get
$$
\dys I_{2,1}(s) \ge \int_{Q_\tau\times \uQ_\tau} \eta_{m,n}\psi_{0}\left(\frac{D_{x} u_{\eps}}{|D_ {x} u_{\eps}|}\right)\chi_{\{\und u>a\}}\left( \d |D_x^s  J_{T\varphi}(u_\eps)| - T(\underline u) \varphi(\und u) \d|D_x^s u_\eps| \right).
$$
We split its Cantor and jump parts. Since
$$
|D_x^c  J_{T\varphi}(u_\eps)|= |J_{T\varphi}'(u_\eps)||D_x^c u_\eps| = T(u_\eps)\varphi(u_\eps)|D_x^c u_\eps|,
$$
we have
$$
I_{2,1}(c) \geq\int_{Q_\tau\times \uQ_\tau} \eta_{m,n}\psi_{0}\left(\frac{D_{x} u_{\eps}}{|D_{x} u_{\eps}|}\right)\chi_{\{\und u>a\}}\left( T(u_\eps)\varphi(u_\eps) -T(\und u) \varphi(\und u)\right) \d|D_x^c u_\eps|.
$$
 Since $T$ and $\varphi$ are (locally) Lipschitz continuous and $u,\und u$ are bounded, a positive constant $L$ exists such that
\begin{eqnarray*}
\frac{1}{\eps}{I_{2,1}(c)} & {\geq} & {-} \frac{L{K}}{\eps}  \int_{Q_\tau\times \uQ_\tau} \eta_{m,n} \chi_{\{\underline{u}{>} a\}}
  \vert u_{\eps} - \underline{u}\vert
 \d \vert D_x^c u_\eps \vert
 \\ & \ge & { - L{K} \|\eta_{m,n}\|_\infty \int_{Q_\tau\times \uQ_\tau} \chi_{\{\underline{u}{>} a\}}
 \d \vert D_x^c u_\eps \vert \stackrel{\eqref{coarea}}\to 0 \quad\mbox{as $\eps\to 0$,}}
\end{eqnarray*}
where $K$ is the maximum value of $\psi_{0}$ on $S^{N-1}$. Applying the same argument to $I_{2,2}(c)$, we conclude that
\begin{equation}\label{UEC}
{\liminf_{\eps\to 0} \frac{I_{2}(c)}{\eps} \geq 0.}
\end{equation}
Concerning the jump part, recalling that $\und u-\eps\le u_\eps\le \und u$ and reasoning as above, we have
\begin{eqnarray*}
\lefteqn{\frac{1}{\eps} I_{2,1}(j) \geq   \frac{1}{\eps} \int_{(0,\tau)\times {\uQ_\tau}}\int_{J_{u_{\eps}(t)}} \eta_{m,n}\psi_{0}\left(\frac{D_{x} u_{\eps}}{|D_{x} u_{\eps}|}\right) \chi_{\{\und u>a\}}
\times }
\\&& \qquad \times \left(\left(\int_{u_{\eps}^-}^{u_{\eps}^+}
T(s)\varphi(s)\d s\right) - T(\underline u)\varphi(\underline u)|u_\eps^+-u_\eps^-| \right)\, \d\mathcal H^{N-1}{(x)}
\\ & = &
  \frac{{1}}{\eps}\int_{(0,\tau)\times \uQ_\tau}\int_{J_{u_{\eps}(t)}} \eta_{m,n}\psi_{0}\left(\frac{D_{x} u_{\eps}}{|D _{x} u_{\eps}|}\right) \chi_{\{\und u>a\}}\times
\\ && \qquad \times
\left(\int_{u_{\eps}^-}^{u_{\eps}^+}
(T(s)\varphi(s) - T(\underline u)\varphi(\underline u))\d s\right) \d\mathcal H^{N-1}{(x)}
\\
 &\ge &  -\frac{L{K}}{\eps}\int_{(0,\tau)\times \uQ_\tau}\int_{J_{u_{\eps}(t)}} \eta_{m,n}
\chi_{\{\und u>a\}}
\left(\int_{u_{\eps}^-}^{u_{\eps}^+}
(\underline u-s)\d s\right) \d\mathcal H^{N-1}{(x)}
\\   & \ge &  -L{K}\|\eta_{m,n}\|_\infty \int_{(0,\tau)\times \uQ_\tau}\int_{J_{u_{\eps}(t)}}
\chi_{\{\und u>a\}}
\left(u_{\eps}^+ - u_\eps^-\right) \d\mathcal H^{N-1}{(x)}
\\ &=&  -L{K}\|\eta_{m,n}\|_\infty \int_{Q_\tau\times \uQ_\tau}
\chi_{\{\und u>a\}} \d |D_x^j u_\eps| \stackrel{\eqref{coarea}}\to 0 \quad\mbox{as $\eps\to 0$.}
\end{eqnarray*}
By applying the same argument on $I_{2,2}(j)$, we conclude that
\begin{equation}\label{UEJ}
\liminf_{\eps\to 0}\frac{I_{2}(j)}{\eps} \geq  0.
\end{equation}
Collecting \eqref{ineqforj}, \eqref{UEAC}, \eqref{UEC}, and \eqref{UEJ} into \eqref{2UE8} and using \eqref{ineqforj}, we conclude that
\begin{equation}\label{latter}
-\int_{Q_\tau\times \uQ_\tau}\left((\eta_{m,n})_t J_T^{\underline u}(u) + (\eta_{m,n})_\ut J_T^u(\underline u)\right)\le 0.
\end{equation}

\it (3). Conclusion.
Since \eqref{latter} does not contain any spatial gradient, we can divide it by $b-a$ and easily pass it to the limit as $m\to \infty$, $a\to 0$, and $b\to 0$, in this order. Noting that
$$
\lim_{b\to 0}\lim_{a\to 0}\frac{1}{b-a} T(s) =\sign(s)_+,
$$
$$
\lim_{b\to 0}\lim_{a\to 0}\frac{1}{b-a} J_T^l(r) \stackrel{\eqref{def-JE}} = \int_0^r \sign (s-l)_+ \d s=(r-l)_+,
$$
we obtain
\begin{equation}\label{UE8conclusion6}
-\int_{\underline{(0,\tau)}\times Q_\tau}(
(u-\underline{u})_+(\chi_n)_t + (\underline u-u)_+(\chi_n)_\ut)\leq 0,
\end{equation}
where
\begin{equation}\label{def-chin}
u=u(t,x),\quad \und u=\und u(\ut, x), \quad \chi_n = \tilde{\rho}_n(t - \ut) \phi \bigg(\frac{t+\ut}{2}\bigg), \quad (\chi_{n})_t+(\chi_n)_\ut = \tilde\rho_n \phi'.
\end{equation}
We write
\begin{eqnarray*}
-\int_{\underline{(0,\tau)}\times Q_\tau} (\underline u-u)_+\tilde\rho_n \phi'
& \stackrel{\eqref{def-chin}}= & -\int_{\underline{(0,\tau)}\times Q_\tau}
(\underline u-u)_+((\chi_n)_t+(\chi_n)_\ut)
\\ & \stackrel{\eqref{UE8conclusion6}}=& {-\int_{\underline{(0,\tau)}\times Q_\tau} \left((\underline u-u)_+ - (u-\und u)_+ \right)(\chi_{n})_t}
\\
& = & \int_{\underline{(0,\tau)}\times Q_\tau}
(u-\underline u)(\chi_n)_t \stackrel{\eqref{def-chin}}=\int_{{Q_\tau}}
u(\chi_n)_t = 0,
\end{eqnarray*}
where in the last equality we used Proposition \ref{prop-mass-cons}. Letting $n\to \infty$, we obtain
$$
\begin{array}{l}
- \displaystyle \int_{Q_\tau} (
\underline u(t,x)-{u}(t, x))_+ \,  \phi'(t) \d t \d x \leq 0.
\end{array}
$$
Since this is true for all $0 \leq \phi \in {\mathcal D}((0, \tau))$,
it implies
$$
\int_{\R^N}   (\underline u(t,x) - {u}(t, x))_+  \d x
\leq \int_{\R^N}(\underline u(0) - {u}_0)_+ \d x {\le 0}
\ \ \ \ {\rm for \ all} \ \ t \geq 0.
$$
\end{proof}

\section{The speed-limited porous medium equation}\label{S-M}\setcounter{equation}{0}

With Theorem \ref{T-sub} at hand, we can now focus on the analysis of the waiting time phenomenon. Here and in the next section we will prove Theorem \ref{Tm-1}. We begin by considering \eqref{M}, which is slightly simpler since the subsolutions we construct are continuous: they are of the form
\begin{equation}\label{F}
u(t,x)= b^{\frac{1}{1-M}}\left(\ell-\frac{1}{1+wt}\right)^{\frac{1}{1-M}} \left(1-\frac{|x|^{2}}{(1+wt)^{2}}\right)^{\frac{1}{M-1}}_{+}\,.
\end{equation}
The form of the $x$-depending factor is chosen such that the subsolution's support evolves with constant speed $w$; the exponent $1/(M-1)$ is chosen in order to ease the calculation of $\nabla u^{M-1}$, but we expect that this choice is unessential. The form of the first, $x$-independent factor is then chosen consistently, and its exponent is dictated by the homogeneity of \eqref{M} for $|\nabla u^{M-1}|\ll 1$ (see \eqref{dec1} below). The following holds:

\begin{lemma}\label{subM}
For all $b>0$, $\ell>1$, $K>0$, and $w>0$ such that
\begin{equation}\label{W}
\frac{2N(M-1)}{b}\leq w\leq \frac{1}{\sqrt{1+\frac{b^{2}}{4} (\ell -1+\frac{\ell }{K})^{2}}}\,,
\end{equation}
the function $u$ defined in \eqref{F} is a subsolution to \eqref{M} in  $\left(0,\frac{1}{wK}\right)\times \R^N$.
\end{lemma}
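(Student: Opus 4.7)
My plan is to reduce the entropy subsolution inequality \eqref{main-ineq} to a classical pointwise PDE inequality $u_t\le\dive F$ on $\{u>0\}$, where $F:=u\nabla u^{M-1}/\sqrt{1+|\nabla u^{M-1}|^2}$, and then to verify the latter using the two conditions in \eqref{W}. For the regularity part: since $1/(M-1)>0$, the function $u$ is continuous on $[0,1/(wK))\times\R^N$, supported in $\overline{B(0,r(t))}$ with $r(t):=1+wt$, belongs to $C([0,1/(wK));L^1(\R^N))\cap L^\infty$, and is $C^\infty$ on the open set $\{u>0\}=\{|x|<r(t)\}$; for any $0<a<b$, the superlevel set $\{u(t,\cdot)>a\}$ is a smooth ball compactly contained in $\{u(t,\cdot)>0\}$, so $T_{a,b}^a(u)\in L^1_\loc((0,1/(wK));BV(\R^N))$.

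For the pointwise calculation, set $\Lambda(t):=\ell-1/r(t)$, $B:=1-|x|^2/r^2$, $\tau:=|x|^2/r^2$, so that $u^{M-1}=(b\Lambda)^{-1}B$ is linear in $|x|^2$. Then $\nabla u^{M-1}=-2x/(b\Lambda r^2)$ and $\alpha:=|\nabla u^{M-1}|^2=4\tau/(b^2\Lambda^2 r^2)$; write $s:=\sqrt{1+\alpha}$. A direct differentiation yields
\begin{align*}
u_t&=\frac{1}{M-1}\left[-\frac{bw\,B^{\frac{1}{M-1}}}{(b\Lambda)^{\frac{M}{M-1}} r^2}+\frac{2\tau w\, B^{-\frac{M-2}{M-1}}}{(b\Lambda)^{\frac{1}{M-1}} r}\right], \\
\dive F&=-\frac{2B^{\frac{1}{M-1}}}{(b\Lambda)^{\frac{M}{M-1}} r^2 s}\left[N-\frac{2\tau}{(M-1)B}-\frac{\alpha}{1+\alpha}\right].
\end{align*}
Dividing $u_t\le\dive F$ by $B^{1/(M-1)}/[(b\Lambda)^{M/(M-1)}r^2]>0$ and rearranging, the inequality becomes equivalent to
\begin{equation*}
\frac{2\tau}{(M-1)B}\left(bw\Lambda r-\frac{2}{s}\right)+\frac{2[N+(N-1)\alpha]}{(1+\alpha)^{3/2}}\le \frac{bw}{M-1}. \qquad(\ast)
\end{equation*}
The two bounds in \eqref{W} then control the two summands in $(\ast)$ independently. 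Since $r\le 1+1/K$ for $t<1/(wK)$, one has $\Lambda r=\ell r-1\le \ell-1+\ell/K$; squaring $bw\Lambda r\le 2/s$ gives $b^2w^2\Lambda^2 r^2+4w^2\tau\le 4$, which for $\tau<1$ is implied by $b^2w^2(\ell-1+\ell/K)^2+4w^2\le 4$, i.e., by the upper bound in \eqref{W}; this makes the first summand in $(\ast)$ nonpositive. A monotonicity check shows that $\alpha\mapsto (N+(N-1)\alpha)/(1+\alpha)^{3/2}$ attains its maximum $N$ on $[0,\infty)$ at $\alpha=0$, so the second summand is bounded by $2N$, and $(\ast)$ follows from $2N\le bw/(M-1)$, which is precisely the lower bound in \eqref{W}.

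To pass from the pointwise inequality on $\{u>0\}$ to \eqref{main-ineq}, fix $S,T\in\mathcal T^+$ with left endpoints $a_S,a_T>0$ and a nonnegative $\phi\in C^\infty_c((0,1/(wK))\times\R^N)$, and multiply $u_t\le\dive F$ by $T(u)S(u)\phi\ge 0$. Since the product is supported in $\{u>\max(a_T,a_S)\}\subset\{u>0\}$, where $u$ is smooth, integration by parts in $x$ and in $t$ (using $T(u)S(u)u_t=\partial_t J_{TS}(u)$) produces no boundary terms. Because $u$ has no jump or Cantor parts, the singular measures in \eqref{lsc} vanish, so $h_S(u,DT(u))=S(T^0(u))h(T^0(u),\nabla T^0(u))\mathcal{L}^N$ (and likewise for $h_T$), and the resulting inequality coincides with \eqref{main-ineq}. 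The main obstacle is the algebraic reduction to $(\ast)$, after which \eqref{W} splits naturally into two roles: the upper bound tames the hyperbolic-type term singular at the free boundary $\{|x|=r(t)\}$, while the lower bound dominates the parabolic/dimensional contribution $2N$.
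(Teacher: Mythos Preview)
Your proof is correct and takes essentially the same approach as the paper: both reduce \eqref{main-ineq} to the pointwise inequality $u_t\le\dive F$ via $u\in W^{1,1}$ (no singular parts), and your algebraic rearrangement $(\ast)$ is exactly equivalent to the paper's splitting into \eqref{dec0}--\eqref{dec1} and \eqref{dec2}. One minor inaccuracy: $T(u)S(u)\phi$ need not be supported in $\{u>0\}$ for general $T,S\in\mathcal T^+$ (since $T_{a,b}^\ell(0)=a-\ell$ can be positive when $\ell<a$), but this is harmless because $u_t=\dive F=0$ on the open set $\{u=0\}$ and the integrations by parts are justified by $u\in W^{1,1}$ together with the compact support of $\phi$.
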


\begin{proof}

We need to prove that $u$ satisfies Definition \ref{def-sub} in $\left(0,\frac{1}{wK}\right)\times \R^N$. Since all other properties are obviously satisfied, we only need to check the validity of the entropy inequality \eqref{main-ineq}.

\smallskip

{\it (1) Rewriting entropy inequalities.} Since $u\in W^{1,1}((0,1/(wK))\times\R^N)$, \eqref{main-ineq} reduces to a single inequality:
\begin{eqnarray}
\nonumber
\lefteqn{S(T^0(u)) h(T^0(u),\nabla T^0(u)) + T(S^0(u)) h(S^0(u),\nabla S^0(u))}
\\ \label{L1M} &\le & - (J_{TS}(u))_t + \div(T(u)S(u) \a(u,\nabla u))
\end{eqnarray}
for all $S,T\in \mathcal T^+$. Notice that, since $\nabla S(u)=\nabla S^0(u)$ and $u=S^0(u)$ on supp$(\nabla S^0(u))$, we have
\begin{eqnarray} \nonumber
T(u) \a(u,\nabla u)\cdot\nabla S(u) &=& T(u)\, \a(u,\nabla u)\cdot\nabla S^0(u)
\\ \nonumber & = & T(S^0(u))\, \a(S^0(u),\nabla S^0(u))\cdot\nabla S^0(u)
\\ \label{new0} &=& T(S^0(u))\, h(S^0(u),\nabla S^0(u)).
\end{eqnarray}
Analogously,
\begin{equation}\label{new1}
S(u) a(u,\nabla u)\cdot \nabla T(u)= S(T^0(u)) h(T^0(u),\nabla T^0(u)).
\end{equation}
In view of  \eqref{new0} and \eqref{new1}, \eqref{L1M} translates into
\begin{eqnarray*}
\lefteqn{\frac{u \nabla u^{M-1}\cdot \nabla u}{\sqrt{1+|\nabla u^{M-1}|^2}}(S(u)T'(u) + T(u)S'(u))}
\\ &\le &  - T(u)S(u) u_t + \div\left(T(u)S(u) \frac{u \nabla u^{M-1}}{\sqrt{1+|\nabla u^{M-1}|^2}}\right),
\end{eqnarray*}
i.e.
$$
T(u)S(u)\left(u_t - \dive\left(\frac{u \nabla u^{M-1}}{\sqrt{1+|\nabla u^{M-1}|^2}}\right)\right)\le 0.
$$
Since $T(u)S(u)\ge 0$, \eqref{L1M} is equivalent to
\begin{equation}\label{L1aM}
u_t \le \div\left(\frac{u \nabla u^{M-1}}{\sqrt{1+|\nabla u^{M-1}|^2}}\right).
\end{equation}

{\it (2) Constructing subsolutions.} We look for subsolutions of the form
$$
u(t,x)=\frac{1}{A(t)}f\left(\frac{x}{B(t)}\right), \quad f(y)=(1-|y|^{2})^{\frac{1}{M-1}}, \quad  B(t)=1+wt.
$$
We notice that
\begin{equation}\label{utM}
u_{t} = -\frac{A'}{A^{2}}f - \frac{B'}{A B}{\nabla_y} f \cdot y
\end{equation}
and that, since $\nabla_y f^{M-1}=-2y$,
$$
\div\left(\frac{u \nabla u^{M-1}}{\sqrt{1+|\nabla u^{M-1}|^2}}\right)= \frac{1}{A^M B^2} \dive_y \left(\frac{f\nabla_y f^{M-1}}{D}\right) = -\frac{2}{A^M B^2} \dive_y \left(\frac{fy}{D}\right),
$$
where
$$
D:= \sqrt{1+|\nabla u^{M-1}|^2} = \sqrt{1+\frac{4|y|^2}{A^{2M-2}B^2}}.
$$
Therefore
$$
\div\left(\frac{u \nabla u^{M-1}}{\sqrt{1+|\nabla u^{M-1}|^2}}\right) = -\frac{2}{A^M B^2} \left(\frac{Nf}{D} + \frac{y\cdot \nabla_y f}{D} + |y| \frac{\d}{\d |y|}\left(\frac{f}{D}\right)\right),
$$
and after straightforward computations we obtain that
\begin{equation}\label{divM}
\div\left(\frac{u \nabla u^{M-1}}{\sqrt{1+|\nabla u^{M-1}|^2}}\right) = -\frac{2 y\cdot \nabla_y f}{A^M B^2 D} -\frac{2Nf}{A^M B^2 D^3}\left( 1+ \frac{N-1}{N}\frac{4|y|^2}{A^{2M-2}B^2}\right).
\end{equation}
Observing that $\nabla_{y} f(y)\cdot y\leq 0$, \eqref{utM} and \eqref{divM} show that \eqref{L1aM} is implied by the following two inequalities:
\begin{equation}\label{dec0}
\frac{A'}{A^{2}}\geq \frac{2N}{A^{M}B^{2}}\cdot \frac{\left(1+\frac{(N-1)}{N}\frac{4|y|^{2}}{A^{2(M-1)}B^{2}}\right)} {\left(1+\frac{4|y|^{2}}{A^{2(M-1)}B^{2}}\right)^{3/2}},
\end{equation}
\begin{equation}\label{dec2}
\frac{B'}{AB}\leq \frac{2}{ A^{M}B^{2}\left(1+\frac{4|y|^{2}}{A^{2(M-1)}B^{2}}\right)^{1/2}}\ \,.
\end{equation}
Since the second factor on the right-hand side of \eqref{dec0} is decreasing with respect to $\frac{4|y|^{2}}{A^{2(M-1)}B^{2}}$, \eqref{dec0} is in turn implied by
\begin{equation}\label{dec1}
\frac{A'}{A^{2}}\geq \frac{2N}{A^{M}B^{2}}\,, {\quad\mbox{i.e.}\quad  (A^{M-1})'B^2 \geq 2N(M-1).}
\end{equation}
Therefore, choosing
$$
A(t)=b^{\frac{1}{M-1}}\left(\ell-\frac{1}{1+wt}\right)^{\frac{1}{M-1}},
$$
we see that \eqref{dec1} is satisfied if
\begin{equation}\label{cM1}
(A^{M-1})'B^{2}=bw\geq 2N(M-1)\,.
\end{equation}
On the other hand, \eqref{dec2} is implied by
\begin{equation}\label{cM2}
w=B'\leq \min_{t,y}\frac{1}{\sqrt{\frac{A^{2(M-1)}B^{2}}{4}+ |y|^{2}}}= \frac{1}{\sqrt{1+\frac{b^{2}}{4}\left(\ell-1 +\frac{\ell}{K}\right)^{2}}}\,,
\end{equation}
where in the last step we used
$$
A^{2(M-1)}B^{2}=b^{2}(\ell-1+\ell wt)^{2}\leq b^{2}\left(\ell-1+\frac{\ell}{K}\right)^{2} \quad \mbox{for all $t\in (0,\frac{1}{wK})$.}
$$
Combining the conditions in \eqref{cM1} and \eqref{cM2}, we obtain the condition in  \eqref{W} and the proof is complete.
\end{proof}

By scaling, we obtain the following family of subsolutions.
\begin{corollary}\label{scal}
If $b>0$, $\ell> 1$ $K>0$, and $w>0$  are such that \eqref{W} holds, then for any $s>0$ and any $\xi\in\rn$ the function
\begin{equation}\label{Fs}
\und u(t,x)= b^{\frac{1}{1-M}}\left(\frac{\ell}{s}-\frac{1}{s+wt}\right)^{\frac{1}{1-M}} \left(1-\frac{|\xi-x|^{2}}{(s+wt)^{2}}\right)^{\frac{1}{M-1}}_{+}\,,
\end{equation}
is a subsolution to \eqref{M} in $(0,\frac{s}{wK})\times \R^N$.
\end{corollary}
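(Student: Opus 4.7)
The plan is to derive Corollary~\ref{scal} directly from Lemma~\ref{subM} by exploiting the translation and dilation symmetries of equation~\eqref{M}. Denoting by $u_\star$ the function in \eqref{F}, the first step is to identify
\[
\und u(t,x) := s^{1/(M-1)}\, u_\star\!\left(\frac{t}{s},\,\frac{x-\xi}{s}\right)
\]
with the expression in \eqref{Fs}. This is purely algebraic: using $s^{1/(M-1)} = (s^{-1})^{1/(1-M)}$, the $x$-independent prefactor reads $b^{1/(1-M)}\bigl(s^{-1}(\ell-1/(1+wt/s))\bigr)^{1/(1-M)} = b^{1/(1-M)}(\ell/s - 1/(s+wt))^{1/(1-M)}$, while the substitution $y=(x-\xi)/s$ combined with $1+wt/s = (s+wt)/s$ converts the spatial factor into $(1-|\xi-x|^2/(s+wt)^2)^{1/(M-1)}_+$.

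The second step is to verify that the scaling $(t,x,u)\mapsto(t/s,(x-\xi)/s,s^{1/(M-1)}u)$ preserves the pointwise inequality \eqref{L1aM}, which is the working form of the subsolution condition arrived at in part~(1) of the proof of Lemma~\ref{subM}. Translation by $\xi$ is automatic. For the dilation, a direct chain-rule computation yields $\und u^{M-1}(t,x) = s\, u_\star^{M-1}(t/s,(x-\xi)/s)$ and hence $\nabla_x \und u^{M-1}(t,x) = \nabla_y u_\star^{M-1}|_{y=(x-\xi)/s}$, so the radical $\sqrt{1+|\nabla\und u^{M-1}|^2}$ reproduces exactly the one evaluated at the scaled point. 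As a consequence, both $\und u_t$ and the divergence of the flux pick up the same positive factor $s^{(2-M)/(M-1)}$ under the scaling, so inequality \eqref{L1aM} holds for $\und u$ precisely when it holds for $u_\star$ at $(t/s,(x-\xi)/s)$, i.e. for $t\in(0,s/(wK))$.

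Finally, since $u_\star\in W^{1,1}_{\loc}$ and the scaling preserves this regularity together with nonnegativity and the $L^1\cap L^\infty$ bounds, the reduction of the entropy inequality \eqref{main-ineq} to the classical inequality \eqref{L1aM} established in step~(1) of the proof of Lemma~\ref{subM} applies verbatim to $\und u$; this ensures that Definition~\ref{def-sub} is met on $(0,s/(wK))\times\R^N$. I do not expect any serious obstacle beyond the bookkeeping of exponents: the power $1/(M-1)$ in the time-dependent prefactor of \eqref{Fs} is the unique choice which makes $\sqrt{1+|\nabla u^{M-1}|^2}$ scale invariant under the indicated symmetry, and any other power would produce mismatched prefactors in \eqref{L1aM} and prevent the transfer of the subsolution property from $u_\star$ to $\und u$.
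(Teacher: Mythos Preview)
Your argument is correct and follows essentially the same route as the paper: both proofs exploit the scaling invariance of \eqref{M} under $(t,x,u)\mapsto(\lambda t,\lambda x,\lambda^{1/(1-M)}u)$ together with translation in $x$, and your parametrization $\und u(t,x)=s^{1/(M-1)}u_\star(t/s,(x-\xi)/s)$ is precisely the paper's transformation $u=U\und u$, $x=U^{M-1}\und x$, $t=U^{M-1}\und t$ with $s=U^{1-M}$. Your proposal is somewhat more explicit in justifying why the scaling preserves the entropy subsolution property (via the reduction to \eqref{L1aM} for $W^{1,1}$ functions), whereas the paper simply invokes the scaling invariance; but the underlying argument is the same.
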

\begin{proof}
We use the scaling invariance of \eqref{M} with respect to the following transformations:
$$
u=U\und u, \ \ \ x=U^{M-1}\und x,\ \ \ \text{and}\ \ \ t=U^{M-1}\und t\,.
$$
By Lemma \ref{subM}, provided \eqref{W} holds,
\begin{eqnarray*}
\und u (\und t , \und x)  &= &\frac{1}{U} u (U^{M-1}\und t, U^{M-1}\und x)
\\
& = &
{
\frac{b^{\frac{1}{1-M}}}{U}\left(\ell-\frac{1}{1+w U^{M-1}\und t}\right)^{\frac{1}{1-M}} \left(1-\frac{U^{2M-2}|\und x|^{2}}{(1+wU^{M-1}\und t)^{2}}\right)^{\frac{1}{M-1}}_{+}
}
\\
& = &
b^{\frac{1}{1-M}}\left(\frac{\ell}{U^{1-M}}-\frac{1}{U^{1-M}+w \und t}\right)^{\frac{1}{1-M}} \left(1-\frac{|\und x|^{2}}{(U^{1-M}+w\und t)^{2}}\right)^{\frac{1}{M-1}}_{+}
\end{eqnarray*}
is a subsolution to \eqref{M} in $\left(0, \frac{U^{1-M}}{wK}\right)\times \R^N$. The result follows replacing $U^{1-M}$ by $s$, $(\und t,\und x)$ by $(t,x)$, and using  translation invariance in space.
\end{proof}

We are ready to prove Theorem  \ref{Tm-1} in the case of Equation \eqref{M}.

\begin{proof}[Proof of Theorem \ref{Tm-1}: Equation \eqref{M}.]
Up to a translation and a rotation, we assume without losing generality that $x_0=0$ and that $v_{0}=(-1, 0, \dots ,  0)$. We consider the case  $L<+\infty$ in \eqref{crit-M} (from which the case $L=+\infty$ follows immediately).

\smallskip

We wish to choose  the parameters in Lemma \ref{subM} so that the function $\und u$ given in \eqref{Fs} is a subsolution to \eqref{M} and $\und u(0) \le u_{0}$. We fix
\begin{equation}\label{def-K}
K=2N(M-1).
\end{equation}
In Corollary \eqref{scal} we let $w=K/b$, so that condition \eqref{W} reduces to
$$
\frac{K}{b}\leq \frac{1}{\sqrt{1+\frac{b^{2}}{4} (\ell -1+\frac{\ell }{K})^{2}}}\,,
$$
that is,
\begin{equation}\label{WM}
\frac{4}{b^2} + \left(\ell -1+\frac{\ell }{K}\right)^{2}\leq \frac{4}{K^2}.
\end{equation}
By \eqref{crit-M}, $R>0$ exists such that
$$
u(0,x)\geq \frac{L}{2}|x|^{\frac{2}{M-1}}\, \ \ \ \ \text{in}\ \ B(Rv_{0}, R)\subseteq {\rm supp} (u_{0})\,.
$$
Let $\xi=r_1 v_0$ with
$$
0<r_{1}\leq \min\{R,L^{1-M}\}.
$$
Then $\und u(0)\le u_0$ if
$$
b^{\frac{1}{1-M}}\left(\frac{\ell-1}{s}\right)^{\frac{1}{1-M}} \left(1-\frac{|r_{1}v_{0}-x|^{2}}{s^{2}}\right)^{\frac{1}{M-1}}_{+}\leq  \frac{L}{2}|x|^{\frac{2}{M-1}}\,\ \ \text{in $B(Rv_{0}, R)$}\,,
$$
which may be rewritten as
\begin{equation}\label{desi}
\frac{1}{sb(\ell-1)}\left(s^{2}- |r_{1}v_{0}-x|^{2}\right)_{+}\leq  \frac{L^{M-1}}{2^{M-1}}|x|^2\,\ \ \text{in $B(Rv_{0}, R)$}\,.
\end{equation}
We are now going to choose  $\ell> 1$, $s>0$, and $b>0$ such that \eqref{WM} and \eqref{desi} hold. Let
\begin{equation}\label{condb}
b=\frac{\alpha 2^{M-1} L^{1-M}}{s(\ell-1)}>0,
\end{equation}
where $\alpha>0$, {\em depending only on $N$ and $M$}, will be chosen below. Then \eqref{desi} reduces to
$$
s^{2}\leq |r_{1}v_{0}-x|^{2}+\alpha|x|^{2}\quad \mbox{in $B(Rv_{0}, R)$.}
$$
Since the minimum value of the right-hand side is attained at $x=r_1 v_0/(\alpha+1)$, \eqref{desi} is in turn implied by
\begin{equation}\label{conds}
s:={\frac{\alpha}{\alpha+1} r_1.}
\end{equation}
In view of \eqref{condb} and \eqref{conds}, \eqref{WM} may be rewritten as
\begin{equation}\label{WM1}
\frac{4 r_1^2 (\ell-1)^2}{(\alpha+1)^2 4^{M-1} L^{2(1-M)}} + \left(\ell -1+\frac{\ell }{K}\right)^{2}\leq \frac{4}{K^2}.
\end{equation}
Since $r_1\le L^{1-M}$, \eqref{WM1} is implied by
\begin{equation*}
\frac{4(\ell-1)^2}{(\alpha+1)^2 4^{M-1}} + \left(\ell -1+\frac{\ell }{K}\right)^{2}\leq \frac{4}{K^2}.
\end{equation*}
Therefore we can choose $\ell$, {\em depending only on $N$ and $M$}, so close to $1$ that \eqref{WM} holds. Hence $\und u$ given in \eqref{Fs} is a subsolution to \eqref{M} in  $\left(0,\frac{s}{wK}\right)\times\rn$ and $\und u(0)\le u_0$.

\smallskip

We finally estimate $t_{\ast}$. The time $T_{u}$ at which the support of $\und u$ reaches $x_{0}=0$ is given by
$$
T_{u}\stackrel{\eqref{Fs}}= \frac{r_{1}-s}{w} \stackrel{\eqref{conds}}= \frac{r_1}{(\alpha+1)w}=\frac{s}{\alpha w}.
$$
{We now choose $\alpha=2K$ (recall \eqref{def-K}), so that} $T_{u}<\frac{s}{wK}$. Therefore $\und u$ does reach $x_0=0$, and recalling that $w=K/b$ we obtain
$$
T_{u}=\frac{s}{\alpha w}= \frac{s b}{\alpha K} \stackrel{\eqref{condb}}= \frac{2^{M-1} L^{1-M}}{K(\ell-1)} =:WL^{1-M}\,,
$$
with $W$ depending only on $N$ and $M$. Therefore $t_*\le T_*$, which concludes the proof of Theorem \ref{Tm-1} in the case of Equation \eqref{M}.
\end{proof}

\section{The relativistic porous medium equation}\setcounter{equation}{0}

As we observed in the introduction, in case of \eqref{m} it is natural to look for subsolutions with a jump discontinuity at the boundary of their support:
\begin{equation}\label{def-m-scaled-ok}
u(t,x)=\frac{1}{A(t)}\left(1+\sqrt{r(t)^2-|x|^2}\right)\chi_{Q_0}(t,x),
\end{equation}
where
\begin{eqnarray}\label{at}
A(t) &=& [(m-1)(1+\gamma t)]^{\frac{1}{m-1}}\,,
\\
\label{rt}
r(t) &=& r_{0}+\frac{1}{\gamma (m-1)}\log(1+\gamma t)\,,
\\ \label{defQ}
Q_{0} &=& \left\{(t,x): \ t\in (0,T),\ x\in B(0,r(t)) \right\}.
\end{eqnarray}
The square root in \eqref{def-m-scaled-ok} is chosen for convenience and we expect that it can be replaced by any exponent smaller than $1$ (see \eqref{lkj1}). As we discussed in the introduction, the functions $A$ and $r$ are chosen so that $r'=A^{1-m}$ --which is dictated by a Rankine-Hugoniot condition at the jump set $\partial Q_0$, see \eqref{crucial1} below-- and that $(A^{m-1})'$ is constant --which is dictated by homogeneity, see \eqref{crucial2} below.

\smallskip

In this section we prove:

\begin{proposition}\label{bbb-pro}
Let $N\ge 1$, $m>1$,  $T>0$ and $r_{1}>0$. Then there exist a value $\gamma_{0}\geq 1$ such that the function $u$ defined by \eqref{def-m-scaled-ok}-\eqref{defQ} is a subsolution to \eqref{m}  for any $\gamma\geq \gamma_{0}$ and any {$r_0\in \left[\frac{r_{1}}{2}, r_{1}\right]$}.
\end{proposition}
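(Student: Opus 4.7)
The plan is to verify the three conditions of Definition~\ref{def-sub} for the function $u$ of \eqref{def-m-scaled-ok}-\eqref{defQ}. The basic regularity requirements ($u\in C([0,T);L^1(\R^N))\cap L^\infty$ and $T^a_{a,b}(u)\in L^1_{\loc}(BV)$) are immediate from the explicit form of $u$: $A(t)$ and $r(t)$ are smooth and bounded on $[0,T]$, $u$ is smooth in $Q_0$, and it has a single jump across the smooth moving hypersurface $\partial B(0,r(t))$, whose $\mathcal H^{N-1}$-measure is finite on compact time intervals.

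The heart of the matter is the entropy inequality \eqref{main-ineq}. Since $u$ is smooth in $Q_0$ and vanishes outside $\overline{Q_0}$, for any $S,T\in\mathcal T^+$ the singular part of $DT^0(u(t,\cdot))$ is concentrated on $\partial B(0,r(t))$, with normal $\nu=-\hat x$ pointing from the $u^-$--side (exterior) to the $u^+$--side (interior). I will unpack \eqref{main-ineq}-\eqref{lsc} following Step~1 of the proof of Lemma~\ref{subM}, but keeping the boundary contributions that Gauss' theorem produces on $\partial Q_0$; this splits the entropy inequality into the following two conditions:
\begin{enumerate}
\item[(I)] the pointwise PDE inequality in $Q_0$,
$$u_t \le \dive\!\bigg(\frac{u^m\nabla u}{\sqrt{u^2+|\nabla u|^2}}\bigg);$$
\item[(J)] the Rankine--Hugoniot type inequality on $\partial Q_0\cap\{t>0\}$,
$$r'(t)\,u^+\le -\lim_{y\to r(t)^-}\hat x\cdot\frac{u^m\nabla u}{\sqrt{u^2+|\nabla u|^2}},$$
with $u^+=1/A(t)$, $u^-=0$.
\end{enumerate}

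For (J), set $y=|x|$ and $\sigma=\sqrt{r^2-y^2}$, so that $u=(1+\sigma)/A$, $\nabla u=-x/(A\sigma)$. A direct calculation yields
$$\hat x\cdot\frac{u^m\nabla u}{\sqrt{u^2+|\nabla u|^2}}=-\frac{(1+\sigma)^m y}{A^m\sqrt{(1+\sigma)^2\sigma^2+y^2}}\xrightarrow[y\to r^-]{}-\frac{1}{A^m},$$
so (J) reduces to $r'(t)\le A^{1-m}(t)$, which holds with equality by the choices \eqref{at}-\eqref{rt} (indeed $r'=1/[(m-1)(1+\gamma t)]=A^{1-m}$).

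For (I), I will compute both sides explicitly. With $D=\sqrt{(1+\sigma)^2\sigma^2+y^2}$ and $g(y)=-(1+\sigma)^m y/D$ (which is independent of $A$), one finds
$$u_t=-\frac{A'(1+\sigma)}{A^2}+\frac{rr'}{A\sigma},\qquad\dive\!\bigg(\frac{u^m\nabla u}{\sqrt{u^2+|\nabla u|^2}}\bigg)=\frac{1}{A^m}\bigg[g'(y)+(N-1)\frac{g(y)}{y}\bigg].$$
From \eqref{at}-\eqref{rt} we have $A^{m-1}r'=1$ and $A^{m-2}A'=\gamma/(m-1)$. Multiplying (I) by $A^m$ transforms it into
\begin{equation*}
-\frac{\gamma(1+\sigma)}{m-1}+\frac{r}{\sigma}-g'(y)-(N-1)\frac{g(y)}{y}\le 0.
\end{equation*}
A short Taylor expansion gives $D=r+O(\sigma^3)$, $g(y)=-1-m\sigma+O(\sigma^2)$ and $g'(y)=mr/\sigma+O(1)$ as $\sigma\to 0^+$, hence
$$\frac{r}{\sigma}-g'(y)-(N-1)\frac{g(y)}{y}=-\frac{(m-1)r}{\sigma}+O(1)$$
uniformly in $r\in[r_1/2,r_1+\delta]$, with $\delta$ controlled by $T/\gamma_0$. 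Since $m>1$, the strictly negative term $-(m-1)r/\sigma$ dominates the $O(1)$ remainder for $\sigma\in(0,\sigma_0]$ with $\sigma_0$ small (depending only on $N$, $m$, $r_1$, $T$), \emph{uniformly} in $\gamma\ge 1$. For $\sigma\in[\sigma_0,r]$, the three $\sigma$--dependent terms are uniformly bounded, and the inequality follows from the additional term $-\gamma(1+\sigma)/(m-1)\le -\gamma/(m-1)$ provided $\gamma\ge\gamma_0$ is chosen large enough.

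The main obstacle is (I). The point is that $rr'/(A\sigma)$ and the divergence of the flux each diverge like $1/\sigma$ as $\sigma\to 0^+$, and it is their cancellation that makes (I) hold. The decisive identity behind this cancellation is $A^{m-1}r'=1$, which is precisely the Rankine--Hugoniot balance \eqref{crucial1-intro} seen from the interior; it is what dictates the choice \eqref{at}-\eqref{rt} and produces the favourable sign $-(m-1)r/\sigma$ in the expansion above. Once this cancellation is secured, the remaining uniform bounds and the choice of $\gamma_0$ are straightforward, the uniformity in $r_0\in[r_1/2,r_1]$ being guaranteed by the a priori bound $r(t)\le r_1+\log(1+\gamma T)/[\gamma(m-1)]$.
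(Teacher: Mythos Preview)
Your overall strategy---split the entropy inequality into a bulk pointwise inequality (I) and a jump condition, verify the jump condition via $r'=A^{1-m}=(u^+)^{m-1}$, and handle (I) by showing the $1/\sigma$-singularities cancel with a favourable sign $-(m-1)r/\sigma$ near the front and then using large $\gamma$ elsewhere---is exactly the paper's route. The bulk analysis is fine (modulo a slip: from \eqref{at} one gets $A^{m-2}A'=\gamma$, not $\gamma/(m-1)$; and the uniform upper bound on $r(t)$ for $\gamma\ge 1$ is $r_1+\tfrac{1}{m-1}\log(1+T)$, independent of $\gamma_0$).

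The genuine gap is in your derivation of the jump condition. Your (J) is the Rankine--Hugoniot inequality one obtains from the \emph{distributional} inequality $u_t\le\dive\a(u,\nabla u)$, i.e.\ from the single choice $TS\equiv\text{const}$. But Definition~\ref{def-sub} demands \eqref{main-ineq} for \emph{all} $S,T\in\mathcal T^+$, and on the jump set the left-hand side of \eqref{main-ineq} carries the nontrivial singular measures from \eqref{lsc}. Following Step~1 of Lemma~\ref{subM} ``keeping boundary terms'' therefore does \emph{not} yield (J): it yields, for every $S,T$, the inequality
\[
\int_0^{u^+}(ST)'(\sigma)\,\sigma\bigl(\sigma^{m-1}-r'\bigr)\,d\sigma\ \le\ u^+\,(ST)(u^+)\bigl((u^+)^{m-1}-r'\bigr),
\]
which is the paper's \eqref{lk}. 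You then need either to verify \eqref{lk} directly---which is immediate once one uses $r'=(u^+)^{m-1}$, since the right-hand side vanishes and the integrand is nonpositive (this is what the paper does)---or to argue that (J) implies \eqref{lk} for all $S,T$ (true here because $ST$ is nondecreasing and the constant test is the most restrictive, but this requires a short layer-cake argument you have not given). As written, your reduction to (J) is unjustified.
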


\begin{proof}

{\it (1) Splitting entropy inequalities.} Since $u$ has the form given in  \eqref{def-m-scaled-ok}, both $[(J_{TS}(u))_t](t,\cdot)$ and $[\div(T(u)S(u) \a(u,\nabla u))](t,\cdot)$ are signed measures for all $t\ge 0$. Therefore, \eqref{main-ineq} may be rewritten in form of two inequalities between measures, splitting Lebesgue and singular parts:
\begin{eqnarray}
\nonumber
\lefteqn{S(T^0(u)) h(T^0(u),\nabla T^0(u)) + T(S^0(u)) h(S^0(u),\nabla S^0(u))}
\\ \label{L1} &\le & - (J_{TS}(u))_t + (\div(T(u)S(u) \a(u,\nabla u)))^{a}
\end{eqnarray}
and
\begin{eqnarray}
\nonumber
\lefteqn{|D_x^s J_{S\varphi}(T^0(u))| + |D_x^s J_{T\varphi}(S^0(u))|}
\\ \label{S1}&\le& - (D_t(J_{TS}(u)))^s + (\div (T(u)S(u) \a(u,\nabla u)))^{s},
\end{eqnarray}
where  $S,T\in \mathcal T^+$ and $\mu^a$, resp. $\mu^s$, denote the absolutely continuous, resp. singular, part of the Radon-Nikodym decomposition of a measure $\mu$ with respect to the Lebesgue measure (see \cite[Theorem 1.28]{AFP}). We discuss the two inequalities separately.

\smallskip

{\it (2) Subsolutions on the jump set.} Let us check \eqref{S1}. We note that the singular parts are concentrated on $|x|=r(t)$, where $u$ has jumps with
$u^+(t,x)= 1/A(t)$ and $u^-(t,x) = 0$ (cf. \S\ref{ss-not}).
Hence
\begin{eqnarray} \nonumber
|D_x^s J_{S\varphi}(T^0(u))| &=&  \left(\int_{T^0(u^-)}^{T^0(u^+)}S(\sigma)\sigma^m\d \sigma \right) \mathcal H^{N-1} \llcorner {\{|x|=r\}}
\\ \nonumber
&=& \left(\int_{u^-}^{u^+}S(\sigma)T'(\sigma)\sigma^m\d \sigma \right) \mathcal H^{N-1} \llcorner {\{|x|=r\}}
\end{eqnarray}
and, analogously,
\begin{eqnarray}\nonumber
|D_x^s J_{T\varphi}(S^0(u))| &=& \left(\int_{u^-}^{u^+}S'(\sigma)T(\sigma)\sigma^m\d \sigma \right) \mathcal H^{N-1} \llcorner {\{|x|=r\}}.
\end{eqnarray}
Therefore, since $u^-=0$,
\begin{eqnarray}\label{jk2}
|D_x^s J_{S\varphi}(T^0(u))| + |D_x^s J_{T\varphi}(S^0(u))|
= \left(\int_{0}^{u^+}(S(\sigma)T(\sigma))'\sigma^m\d \sigma \right) \mathcal H^{N-1} \llcorner {\{|x|=r\}}\,.
\end{eqnarray}
For the first term on the right-hand side of \eqref{S1}, arguing as in \cite[proof of (3.3)]{G15}  we obtain
\begin{eqnarray}
\nonumber
\lefteqn{- (D_t(J_{TS}(u)))^s = - {r'}\left(\int_{0}^{u^+} S(\sigma)T(\sigma)\d \sigma\right) \mathcal H^{N-1}\llcorner {\{|x|=r\}} }
\\ \label{jk3} &=& -r' \left(u^+ T(u^+)S(u^+)\  {-} \int_{0}^{u^+} (S(\sigma) T(\sigma))' \sigma\d \sigma\right)\mathcal H^{N-1}\llcorner {\{|x|=r\}},
\end{eqnarray}
where we used one integration by parts in the last equality. Finally, for the the second term on the right-hand side of \eqref{S1}, we have
\begin{eqnarray*}
(\dive (T(u)S(u) \a(u,\nabla u)))^s = -  \lim_{|x|\to r(t)^-} T(u)S(u) \a(u,\nabla u) \cdot \frac{x}{r} \mathcal H^{N-1}\llcorner {\{|x|=r\}}.
\end{eqnarray*}
The fact that $\nabla u$ blows up at the boundary implies that
$$
\lim_{|x|\to r(t)^-} \a(u,\nabla u)\cdot\frac{x}{r}= -(u^+)^{m}.
$$
Therefore
\begin{equation}
\begin{array}{l}\label{jk4}
{(\dive (T(u)S(u) \a(u,\nabla u)))^s} =
\dys T\left(u^+\right)S\left(u^+\right) (u^+)^{m} \ \mathcal H^{N-1}\llcorner {\{|x|=r\}}.
\end{array}
\end{equation}
Combining \eqref{jk2}, \eqref{jk3}, and \eqref{jk4}, we see that \eqref{S1} is equivalent to
\begin{equation}\label{lk}
\int_{0}^{u^+}(S(\sigma)T(\sigma))'\sigma(\sigma^{m-1} - r') \d \sigma
\le  u^+ T(u^+)S(u^+)( (u^+)^{m-1}-r').
\end{equation}
Since $r$ and $A$ have been chosen such that \begin{equation}
\label{crucial1}
r'= A^{1-m} =(u^+)^{m-1},
\end{equation}
the left-hand side of \eqref{lk} is negative (since $(ST)'\ge 0$) and the right-hand side of \eqref{lk} is {zero}. Hence \eqref{S1} holds.

\smallskip

{\it (3) Subsolution in the bulk.} In $Q_0$, arguing as in Step (1) in the proof of Lemma \ref{subM}, we obtain that \eqref{L1} is equivalent to
\begin{equation}\label{L1a}
u_t \le \dive\left(\frac{u^m\nabla u}{\sqrt{u^2+|\nabla u|^2}}\right)\quad\mbox{in $Q_0$.}
\end{equation}
We look for subsolutions of the form \eqref{def-m-scaled-ok}. For notational convenience, we let
\begin{eqnarray*}
\eta(t,x) &:=& \sqrt{r(t)^2-|x|^2},
\\
D(t,x) &:=& \sqrt{\eta^2(1+\eta)^2+|x|^2},
\\
E(t,x) &:=& (1+\eta)^2+\eta(1+\eta)-1.
\end{eqnarray*}
Then, we compute
$$
u_{t}=-\frac{A'}{A^{2}}(1+\eta)+\frac{rr'}{A\eta}
$$
and, since $\nabla \eta=-x/\eta$,
\begin{eqnarray*}
\lefteqn{{A(t)^m} \dive\left(\frac{u^m\nabla u}{\sqrt{u^2+|\nabla u|^2}}\right) = - \dive\left(\frac{(1+\eta)^m x}{D}\right)}
\\
&=&  - \left( N \frac{(1+\eta)^m} {D} + |x|\left(\frac{d}{d|x|}\frac{(1+\eta)^m} {D}\right)\right)
\\ &=&
- \left( N \frac{(1+\eta)^m} {D} + \frac{|x|^2 (1+\eta)^{m}E}{D^3} - \frac{m|x|^2(1+\eta)^{m-1}}{\eta D}
\right)
\\&=:& F(|x|,r).
\end{eqnarray*}
Therefore {$u$} satisfies \eqref{L1a} if and only  if
\begin{equation}\label{subif}
-A^{m-2}A'(1+\eta)+A^{m-1}\frac{r}{\eta}r' {\stackrel{\eqref{crucial1}}= -A^{m-2}A'(1+\eta)+ \frac{r}{\eta}} \leq F(r,|x|)\,.
\end{equation}
Since $A$ has been chosen such that
\begin{equation}\label{crucial2}
A^{m-2}A'=\gamma,
\end{equation}
it follows from \eqref{subif} that \eqref{L1a} is satisfied if and only if
\begin{equation}\label{gmax}
\gamma\geq  \frac{\frac{r}{\eta}-F(r,|x|)}{1+\eta}:=G(r,|x|)\,.
\end{equation}
Observe that $\gamma \mapsto \frac{1}{\gamma}\log(1+\gamma t)$ is nonincreasing. Therefore, for $\gamma\geq 1$ and $t\leq T$ we have
\begin{eqnarray*}
\frac{r_{1}}{2} \leq r(t) &=& r_{0}+\frac{1}{\gamma (m-1)}\log(1+\gamma t) \leq r_{0}+\frac{1}{(m-1)}\log(1+ t)
\\ &\leq& r_{1}+\frac{1}{ (m-1)}\log(1+ T)
\end{eqnarray*}
Hence, by \eqref{gmax}, \eqref{L1a} is satisfied if
$$
\gamma\geq \gamma_0:=\sup_{(r,|x|)\in H} G(r,|x|)\,,
$$
where
$$
H=\left\{(r,y)\in \R^{2}_+: \ r\in\left[\frac{r_{1}}{2},r_{1}+\frac{1}{(m-1)}\log(1+T)\right],\  y\in [0,r)\right\}.
$$
Since $\eta=\sqrt{\rho^{2}-y^{2}}\to 0$ as $(\rho,y)\to (r,r)$, we have
$$
G(\rho,y) \eta  =\frac{\rho - my^{2}(1+\eta)^{m-1}D^{-1}}{1+\eta}+ o(1)\to  r(1-m)<0 \quad \mbox{as $(\rho,y)\to (r,r)$.}
$$
In addition, $G(r,y)$ is continuous in $H$: therefore $\gamma_0$ is finite. Since $\gamma_0$ only depends on $N$, $m$, $T$, and $r_1$, the proof is complete.
\end{proof}

Using the invariance of \eqref{m} with respect to
$$
\hat u=U u, \ \ \ \text{and}\ \ \ \hat t=U^{m-1} t\,
$$
and the translation invariance of \eqref{m} with respect to $x$, we immediately obtain:

\begin{corollary}\label{bbb-cor}
Let $N\ge 1$, $m>1$,  $T>0$, and $r_{1}>0$. Then $\gamma_0 {\ge 1}$ exists such that
\begin{equation}\label{def-m}
\underline u(t,x)= U u(U^{m-1}t, x-\xi)
\end{equation}
is a subsolution to \eqref{m} in $(0,U^{1-m}T)\times\rn$ for any $\gamma\geq \gamma_{0}$, any $r_0\in [r_{1}/2, r_{1}]$, {any $U>0$, and any $\xi\in \rn$}, where $u$ is defined by \eqref{def-m-scaled-ok}{-\eqref{defQ}}. \label{scalm}
\end{corollary}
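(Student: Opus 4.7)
The plan is to deduce the statement from Proposition \ref{bbb-pro} by showing that the class of entropy subsolutions of \eqref{m} (in the sense of Definition \ref{def-sub}) is invariant under the spatial translation $u(t,x)\mapsto u(t,x-\xi)$ and under the scaling $u(t,x)\mapsto U\,u(U^{m-1}t,x)$ for $U>0$. Translation invariance is immediate since neither the flux $\mathbf a(u,\nabla u)=u^m\nabla u/\sqrt{u^2+|\nabla u|^2}$, nor the associated Lagrangian $f$, nor any ingredient of the entropy inequality \eqref{main-ineq}, depends explicitly on $x$; hence, applied to $u(\cdot,\cdot-\xi)$, Definition \ref{def-sub} is satisfied on the same time interval as for $u$.

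For the scaling part, given a subsolution $u$ of \eqref{m} on $(0,T)\times\R^N$ produced by Proposition \ref{bbb-pro}, set $v(t,x):=U\,u(U^{m-1}t,x)$. Formally, $v_t=U^m\partial_{\hat t} u$ and $\nabla v=U\nabla u$ (evaluated at $\hat t=U^{m-1}t$), and using the $m$-homogeneity of $\varphi(s)=s^m$ and the $1$-homogeneity of $\psi_0(\v)=|\v|$ one finds $\mathbf a(v,\nabla v)=U^m\mathbf a(u,\nabla u)$; thus $v_t-\dive\mathbf a(v,\nabla v)=U^m(u_t-\dive\mathbf a(u,\nabla u))$, which makes scaling invariance evident at the formal level. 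To rigorously transfer this to \eqref{main-ineq}, I would pick $S,T\in\mathcal T^+$ and $0\le\phi\in C_c^\infty((0,U^{1-m}T)\times\R^N)$, define $\tilde S(r):=U\,S(r/U)$, $\tilde T(r):=U\,T(r/U)$ (which still belong to $\mathcal T^+$, with thresholds rescaled by $U$), and $\tilde\phi(\hat t,x):=\phi(U^{1-m}\hat t,x)\in C_c^\infty((0,T)\times\R^N)$. Changing variables $\hat t=U^{m-1}t$ in each of the four terms appearing in \eqref{main-ineq} written for $(v,S,T,\phi)$, and exploiting the explicit $U$-homogeneity of $\varphi$, $\psi_0$, and $\mathbf a$ together with the chain rule on the $BV$ level (which carries over to $J_{TS}$, $J_{S\varphi}$, and the singular measures $D^s$ exactly as for classical truncations), each term factors as a common power of $U$ times the corresponding term in \eqref{main-ineq} written for $(u,\tilde S,\tilde T,\tilde\phi)$. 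The latter inequality holds by Proposition \ref{bbb-pro}, yielding the former.

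Composing the two invariances gives the claimed subsolution $\underline u(t,x)=U\,u(U^{m-1}t,x-\xi)$ on $(0,U^{1-m}T)\times\R^N$. The one substantive obstacle is the bookkeeping in the second step: specifically, verifying that the singular parts $|D^s_x J_{S\varphi}(T^0(u))|$, $|D^s_x J_{T\varphi}(S^0(u))|$ and the factors $\psi_0(DT^0(u)/|DT^0(u)|)$ scale with exactly the same power of $U$ as the absolutely continuous and jump contributions — this is where the assumption $\varphi(z)=z^m$ and $\psi_0(\v)=|\v|$ of Remark \ref{rem-appl} enters critically. Once this common factor is identified, the inequality for $v$ is an immediate consequence of that for $u$.
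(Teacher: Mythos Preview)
Your proposal is correct and follows essentially the same approach as the paper, which simply invokes the invariance of \eqref{m} under $\hat u=Uu$, $\hat t=U^{m-1}t$ and spatial translation. You have fleshed out the bookkeeping needed to transfer these invariances to the entropy inequality in Definition~\ref{def-sub}, which the paper leaves implicit.
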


Unlike \eqref{M}, \eqref{m} has no scaling invariance with respect to the spatial variables $x$. For this reason, the lower bound $\gamma_0$ on $\gamma$ depends on $r_1$ (and $T$). Nevertheless, as the following proof shows, a careful choice of the parameters $r_0$, $T$, $\gamma$, and $U$ permits to obtain a waiting time bound which is independent of spatial legnthscales or parameters, such as $r_1$.

\begin{proof}[Proof of Theorem \ref{Tm-1}: Equation \eqref{m}.]
As for \eqref{M}, we may assume that $x_0=0$, $v_{0}=(-1, 0, \dots ,  0)$, and $L<+\infty$ in \eqref{crit-m}.

\smallskip

Let $\xi=r_1 v_0$ and $T={4^{m+1}L^{1-m}}$. We will choose the parameters $r_1>0$, {$r_0\in[r_1/2,r_1]$,} and $U>0$ in Corollary \ref{scalm} such that the function $\underline u(t,x)$ in \eqref{def-m} is a subsolution to \eqref{m} up to time $U^{1-m}T$. By \eqref{crit-m}, $R>0$ exists such that
$$
u(0,x)\geq \frac{L}{2}|x|^{\frac{1}{m-1}}\, \quad \text{in}\ \ B(Rv_{0}, R).
$$
Hence, provided that $r_1\le R$, it suffices to verify that
\begin{equation}\label{suffi}
\underline u(0,x)^{m-1} = \frac{U^{m-1}}{m-1}\left(1+\sqrt{r_{0}^{2}-|x-r_{1}v_{0}|^{2}}\right)^{m-1} \leq \frac{L^{m-1}}{2^{m-1}}|x| \quad {\mbox{in $B(r_1v_0,r_0)$}.}
\end{equation}
Since $|x|\ge r_1-r_0$ in $B(r_1v_0,r_0)$, \eqref{suffi} is implied by
\begin{equation}\label{suffi2}
\frac{U^{m-1}}{m-1}(1+r_{0})^{m-1}\leq\frac{L^{m-1}}{2^{m-1}}(r_{1}-r_{0}).
\end{equation}
We fix
$$
r_{1}\le \min\left\{{R}, \frac{1}{m-1},1\right\},\quad \gamma = \max (\gamma_{0},2), \quad\mbox{and}\quad r_{0}=r_{1}\frac{\gamma-1}{\gamma}
$$
(note that $\frac{r_{1}}{2}\leq r_{0}<r_{1}$ since $\gamma\ge 2$). Then  \eqref{suffi2} reduces to
$$
\frac{U^{m-1}}{m-1}\left(1+ r_{1}\frac{\gamma-1}{\gamma}\right)^{m-1}\leq\frac{L^{m-1}}{2^{m-1}}\ \frac{r_{1}}{\gamma}\,,
$$
which, since $r_1\le 1$, is implied by
$$
\frac{{2^{m-1}}U^{m-1}}{m-1}\leq\frac{L^{m-1}}{2^{m-1}}\ \frac{r_{1}}{\gamma},
$$
which in turn holds true choosing
\begin{equation}\label{iu}
U^{m-1}=(m-1)\frac{L^{m-1}}{{4}^{m-1}}\ \frac{r_{1}}{\gamma}.
\end{equation}
The time $T_{u}$ at which the support of $\underline u$ reaches $x_0=0$ is implicitly defined by
$$
r_{1}={r_{1}}\frac{\gamma-1}{\gamma}+\frac{1}{\gamma (m-1)}\log (1+\gamma U^{m-1}T_{u}),
$$
or, equivalently,
$$
r_{1}=\frac{1}{(m-1)}\log (1+\gamma U^{m-1}T_{u}),
$$
that is, recalling that $(m-1)r_{1}\leq1$,
$$
T_{u}= \frac{e^{ (m-1)r_{1}}-1}{\gamma U^{m-1}} \stackrel{\eqref{iu}}= {4}^{m-1}L^{1-m}\frac{e^{ (m-1)r_{1}}-1}{(m-1)r_{1} }\leq {4}^{m} L^{1-m} {:=W L^{1-m}<T}.
$$
Therefore $t_*\le W L^{1-m}$, which concludes the proof of Theorem \ref{Tm-1}.
\end{proof}

\ignore{
\section*{Appendix}
\setcounter{equation}{0}
\renewcommand{\theequation}{A.\arabic{equation}}
\renewcommand{\theassumption}{A.\arabic{assumption}}
\renewcommand{\thetheorem}{A.\arabic{theorem}}
}

\bigskip
\footnotesize
\noindent\textit{Acknowledgments.} {The second and third author acknowledge partial support by the Spanish MEC and FEDER project MTM2015-70227-P. The third author has been partially supported by the Gruppo Nazionale per l'Analisi Matematica, la Probabilit\`a e le loro Applicazioni (GNAMPA) of the Istituto Nazionale di Alta Matematica (INdAM).}


\begin{thebibliography}{SK}



%


\bibitem{AFP}{Ambrosio L., Fusco N., Pallara D. (2000). {\it Functions of bounded variation and free discontinuity problems.} Oxford Mathematical Monographs.}




 \bibitem{NA-ACM} Andreu, F., Caselles, V., Maz\'on, J. M. (2005) A strongly degenerate quasilinear elliptic equation. {\it  Nonlinear Anal.} 61, no. 4, 637--669.

\bibitem{ACM}{Andreu F., Caselles V., Maz\'on J. M.  (2005). The Cauchy problem for a strongly degenerate quasilinear equation. {\it J. Euro. Math. Soc.} {7}, no. 3, 361--393.}


\bibitem{ACM-JDE} Andreu, F., Caselles, V., Maz\'on, J. M. (2010). A Fisher-Kolmogorov equation with finite speed of propagation. {\it J. Differential Equations} 248 , no. 10, 2528--2561.


\bibitem{ACMM-ARMA}{Andreu F., Caselles V., Maz\'on J. M.,  Moll S. (2006).  Finite propagation speed for limited flux diffusion equations. {\it Arch. Rat. Mech. Anal.} {182},  269--297. }
\bibitem{ACMM-JEE}Andreu F., Caselles V., Maz\'on J. M.,  Moll S. (2007). A diffusion equation in transparent media. {\it J. Evol. Equ.} 7, no. 1, 113--143

\bibitem{ACMM-MA}Andreu F., Caselles V., Maz\'on J. M.,  Moll S. (2010). The Dirichlet problem associated to the relativistic heat equation. {\it Math. Ann.} 347, no. 1, 135--199.

\bibitem{acms1} Andreu F., Calvo J.,  Maz\'on J. M. , Soler J. (2012). On a nonlinear flux-limited equation arising in the transport of morphogens. {\it J. Differential Equations} {252}, no. 10, 5763--5813.

\bibitem{ACMSV}{Andreu F., Caselles V., Maz\'on J. M., Soler J., Verbeni M. (2012). Radially symmetric solutions of a tempered diffusion equation. A porous media, flux-limited case. {\it SIAM J. Math. Anal.} {44}, 1019--1049.}


\bibitem{BW} Bellomo N., Winkler M. A degenerate chemotaxis system with flux limitation: Maximally extended solutions and absence of gradient blow-up.
    {\it Comm. Partial Differential Equations}, in press.

\bibitem{bdp}  Bertsch M.,  Dal Passo R.  (1992). Hyperbolic phenomena in a strongly degenerate parabolic equation. {\it Arch. Ration. Mech. Anal.} {117}, 349--387.



\bibitem{bl2}{Blanc P.  (1993). On the regularity of the solutions of some degenerate parabolic equations. {\it Comm. Partial Differential Equations} {18}, 821--846.}

\bibitem{br} Brenier Y. (2003). {\it Extended Monge-Kantorovich theory.} In: Optimal Transportation and Applications, Lectures given at the C.I.M.E. Summer School help in Martina Franca, L.A. Caffarelli and S. Salsa (eds.), Lecture Notes in Math. {1813}, Springer-Verlag, 91--122.

\bibitem{c15}{Calvo J. (2015). Analysis of a class of degenerate parabolic equations with saturation mechanisms. {\it SIAM J. Math. Anal.} {47}, no. 4, 2917--2951. }


\bibitem{cccss2} Calvo J.,  Campos J., Caselles V.,  S\'anchez O.,  Soler J. (2015). Flux-saturated porous media equations and applications. {\it EMS Surv. Math. Sci.} {2}  no. 1, 131--218.

\bibitem{cccss} {Calvo J.,  Campos J., Caselles V.,  S\'anchez O.,  Soler J.  Pattern formation in a flux limited reaction--diffusion equation of porous media type.  {\it Invent. Math.} 206 (2016), 57-108. }

\bibitem{CC} Calvo, J., Caselles, V. (2013) Local-in-time regularity results for some flux-limited diffusion equations of porous media type. {\it Nonlinear Anal.} 93, 236--272.

\bibitem{cmsv} Calvo J., Maz\'on J. M.,  Soler J., Verbeni M.  (2011). Qualitative properties of the solutions of a nonlinear flux-limited equation arising in the transport of morphogens. {\it Math. Models Methods Appl. Sci.} {21}, suppl. 1, 893--937.

\bibitem{CCM}{Carrillo J. A., Caselles V.,  Moll S. (2013). On the relativistic heat equation in one space dimension. {\it Proc. London Math. Soc.} {107}, 1395--1423.}

\bibitem{C-DCDS}{Caselles V. (2011). An existence and uniqueness result for flux limited diffusion equations. {\it Discrete Contin. Dyn. Syst.} {31}, no. 4, 1151--1195. }

\bibitem{C-JDE}{Caselles V.  (2011). On the entropy conditions for some flux limited diffusion equations. {\it J. Differential Equations} {250}, no. 8, 3311--3348.}

\bibitem{C-PM}{Caselles V. (2013). Flux limited generalized porous media diffusion equations. {\it Publicacions Matem\`{a}tiques} {57}, 155--217. }

\bibitem{Caselles-Annali}{Caselles V.  (2015). Convergence of flux limited porous media diffusion equations to its classical counterpart. {\it Annali della Scuola Normale Superiore di Pisa (5)} {14}, no. 2, 481--505.}

\bibitem{ckr} Chertock A.,  Kurganov A.,  Rosenau P. (2003).  Formation of discontinuities in flux-saturated degenerate parabolic equations. {\it Nonlinearity} {16},  1875--1898.

\bibitem{dp}{Dal Passo, R. (1993). Uniqueness of the entropy solution of a strongly degenerate parabolic equation.
{\it Comm. Partial Differential Equations} 18, 265--279.}

\bibitem{DGG}{Dal Passo R., Giacomelli L., Gr\"un G. (2001). A waiting time phenomenon for thin film equations. {\it Ann. Scuola Norm. Sup. Pisa Cl. Sci. (4)} {30} no. 2, 437--463. }


\bibitem{DuM}{Duderstadt J. J.,  Moses G. A.  (1982). {\it Inertial confinement fusion.}  John Wiley \& Sons. New York.}



\bibitem{F}{ Fisher J. (2014). Upper bounds on waiting times for the thin-film equation: the case of weak slippage. {\it Arch. Ration. Mech. Anal.} {211}, 771--818. }

\bibitem{G15}{Giacomelli L. (2015). Finite speed of propagation and waiting time phenomena for degenerate parabolic equations with linear-growth Lagrangian. {\it SIAM J. Math. Anal.} {47}, 2426--2441. }

\bibitem{GG}{Giacomelli L., Gr\"un G. (2006). Lower bounds on waiting times for degenerate parabolic equations and systems. {\it Interfaces Free Bound.} {8}, no. 1, 111--129. }

\bibitem{G}{Gr\"un G. (2004). Droplet spreading under weak slippage: the waiting time phenomenon. {\it  Ann. Inst. H. Poincar\'{e} Anal. Non Lin\'{e}aire } {21}, no. 2, 255--269. }


\bibitem{r1}  Rosenau P. (1990). Free energy functionals at the high gradient limit. {\it Phys. Rev. A} {41}, 2227--2230.
\bibitem{r2} Rosenau P. (1992). Tempered diffusion: a transport process with propagating front and inertial delay. {\it Phys. Rev. A} {46}, 7371--7374.


\bibitem{V} V\'azquez J. L.  (2007). {\it The porous medium equation. Mathematical theory.} Oxford Mathematical Monographs. The Clarendon Press, Oxford University Press, Oxford.
\end{thebibliography}
\end{document}